\newcounter{commentcounter}
\title{On the growth of Betti numbers in $p$-adic analytic towers}
\author{Nicolas Bergeron} 
\address{Institut de Math\'ematiques de Jussieu,  
Unit\'e Mixte de Recherche 7586 du CNRS, 
Universit\'e Pierre et Marie Curie, 
4, place Jussieu 75252 Paris Cedex 05, France \\}
\email{bergeron@math.jussieu.fr}
\urladdr{http://people.math.jussieu.fr/~bergeron}
\author{Peter Linnell}
\address{Department of Mathematics\\
Virginia Tech\\
Blacksburg, VA 24061-0123,USA}
\email{plinnell@math.vt.edu}
\urladdr{http://www.math.vt.edu/people/plinnell/}
\author{Wolfgang L\"uck}
\address{Mathematisches Institut der Universit\"at Bonn\\
                Endenicher Allee 60\\
                53115 Bonn, Germany}
         \email{wolfgang.lueck@him.uni-bonn.de}
          \urladdr{http://www.him.uni-bonn.de/lueck}
\author{Roman Sauer}
\address{Institute of Algebra and Geometry \\
Karlsruhe Institute of Technology\\ 76133 Karlsruhe, Germany}
\email{roman.sauer@kit.edu}
\urladdr{http://www.math.kit.edu/iag7}
 \DeclareFontFamily{OT1}{rsfs}{}
\DeclareFontShape{OT1}{rsfs}{n}{it}{<-> rsfs10}{}
\DeclareMathAlphabet{\mathscr}{OT1}{rsfs}{n}{it}
\newcommand{\Z}{\mathbb{Z}}
\newcommand{\projtensor}{\widehat{\otimes}}
\newcommand{\gr}{\operatorname{gr}}
\DeclareFontFamily{OT1}{rsfs}{}
\DeclareFontShape{OT1}{rsfs}{n}{it}{<-> rsfs10}{}
\DeclareMathAlphabet{\mathscr}{OT1}{rsfs}{n}{it}
\newcommand{\Q}{\mathbb{Q}}
\newcommand{\coker}{\operatorname{coker}}
\newcommand{\rank}{\operatorname{rank}}
\newcommand{\im}{\operatorname{im}}
\newtheorem{thm}{Theorem}[section] 
\newtheorem{lem}[thm]{Lemma}         
\newtheorem*{lem*}{Lemma}         
\newtheorem{prop}[thm]{Proposition}
\newtheorem*{prop*}{Proposition}
 \newtheorem{quest}[thm]{Question}
\theoremstyle{definition}
\newtheorem{definition}[thm]{Definition}   
\newtheorem{rmk}[thm]{Remark}
\numberwithin{equation}{section}
\newcommand{\N}{\mathbb N}
\newcommand{\IF}{{\mathbb F}}
\newcommand{\IN}{{\mathbb N}}
\newcommand{\IQ}{{\mathbb Q}}
\newcommand{\IZ}{{\mathbb Z}}
\newcommand{\GL}{\mathrm{GL}}
\begin{document}

\begin{abstract}
  We study the asymptotic growth of Betti numbers in tower of finite
covers and provide simple proofs of approximation results, which were
previously obtained by Calegari-Emerton~\cite{CE1,CE2}, in the generality of
arbitrary $p$-adic analytic towers of covers.
Further, we also obtain partial results about arbitrary pro-$p$ towers. 
\end{abstract}

\keywords{Asymptotic growth
  of Betti numbers, $p$-adic analytic groups}
\subjclass[2010]{58G12, 55P99}

\maketitle


\section{Introduction and statement of results}

This paper is mainly concerned with the asymptotic growth of Betti numbers in a tower 
of finite covers of a compact space $X$ 
associated to a chain of subgroups of the fundamental group of $X$ which gives 
rise to a $p$-adic analytic group. Both Betti numbers with coefficients in $\IQ$ and $\IF_p$ are considered. Especially the case of $\IF_p$-coefficients received 
a lot of attention in recent years. We only name here the 
work of Calegari-Emerton~\cite{CE1, CE2}, which is motivated by the 
$p$-adic Langlands program, and the work of Lackenby~\cite{lackenby+tau, lackenby+large} in group theory, which 
is connected to property $\tau$ and $3$-manifold theory.

\subsection{Global setup}\label{sec:global}
With the exception of section~\ref{sec:not p-adic}, we retain the following
setup throughout this paper.  Let $X$ be a connected compact CW-complex with
fundamental group $\Gamma$. Let $p$ be a prime, let $n$ be a positive integer,
and let $\phi : \Gamma \rightarrow \GL_n (\Z_p)$ be a homomorphism. The closure
of the image of $\phi$, which is denoted by $G$, is a $p$-adic analytic group
admitting an exhausting filtration by open normal subgroups:
$$G_i = \mathrm{ker} \left( G \rightarrow \GL_n (\Z / p^i \Z) \right).$$
Set $\Gamma_i = \phi^{-1} (G_i )$, and let $X_i$ be the corresponding finite
cover of $X$. Let $\overline{X}$ be the cover of $X$ corresponding to the kernel
of $\phi$ and $\overline{\Gamma} = \Gamma / \mathrm{Ker} (\phi)$; note that
$\overline{\Gamma}$ acts properly and freely on $\overline{X}$ with quotient
$\overline{\Gamma} \backslash \overline{X} = X$.
Our main concern is the growth of the Betti numbers 
\begin{align*}
	b_k(X_i)&=\dim_\IQ H_k(X_i,\IQ);\\
	b_k(X_i,\IF_p)&=\dim_{\IF_p} H_k (X_i ,
	\mathbb{F}_p),
\end{align*}
with coefficients in $\IQ$ and $\IF_p$ as functions of~$i$.

\subsection{Growth of Betti numbers in a $p$-adic analytic tower}
W.~L\"uck proved that for each integer $k$ the
sequence $b_k (X_i )/[\Gamma : \Gamma_i]$ always converges as $i\to\infty$, 
and the limit equals the $k$-th \emph{$L^2$-Betti number} $\beta_k (\overline{X} ,
\overline{\Gamma})$ of the action of $\overline{\Gamma}$ on $\overline{X}$.  In
that context we obtain the following result on the rate of convergence in terms 
of the dimension of $G$ as a $p$-adic analytic group. We refer 
to~\cite[Theorem~8.36 on p.~201]{DDMS} for equivalent characterizations of the 
dimension of~$G$. 

\begin{thm} \label{T1} Let $d= \dim (G)$. Then, for any integer $k$ and as $i$
  tends to infinity, we have:
$$b_k (X_i) = \beta_k (\overline{X} , \overline{\Gamma}) [\Gamma : \Gamma_i] + O\left([\Gamma : \Gamma_i]^{1-{1/d}} \right).$$
\end{thm}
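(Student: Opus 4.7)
My plan is to work with the cellular chain complex of $\overline{X}$, reduce the theorem to a growth estimate for the $\mathbb{Q}$-ranks of boundary operators in the finite covers, and then exploit the ring theory of the Iwasawa algebra $\Lambda:=\mathbb{Z}_p[[G]]$. After replacing $G$ by a finite-index open uniform pro-$p$ subgroup---which only multiplies $[\Gamma:\Gamma_i]$ by a bounded constant---I may assume $G$ is uniform of Lazard-dimension $d$, so that $[G:G_i]=p^{di}$ and $\phi$ induces a bijection $\Gamma/\Gamma_i\to G/G_i$. Let $C_\ast=C_\ast(\overline{X};\mathbb{Z})$, viewed as a bounded complex of finitely generated free $\mathbb{Z}[\overline{\Gamma}]$-modules of ranks $f_k$. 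Applying rank-nullity to the complex $C_\ast^i:=C_\ast\otimes_{\mathbb{Z}[\overline{\Gamma}]}\mathbb{Q}[\overline{\Gamma}/\overline{\Gamma}_i]$, together with the analogous $L^2$-dimensional identity $\beta_k=f_k-\rank^{(2)}(\partial_k)-\rank^{(2)}(\partial_{k+1})$, reduces the theorem to the estimate
\[
 \rank_{\mathbb{Q}}(\partial_k^i) = r_k\,[\Gamma:\Gamma_i] + O\!\bigl([\Gamma:\Gamma_i]^{1-1/d}\bigr),
\]
valid for each $k$, with some constants $r_k\geq 0$ that will eventually be forced to equal $\rank^{(2)}(\partial_k)$ by Lück's approximation theorem and an alternating-sum induction over $k$.

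To obtain this rank estimate, I would extend scalars along $\mathbb{Z}[\overline{\Gamma}]\to\Lambda$ and consider $M_k:=\im(\partial_k\otimes\Lambda)$, a finitely generated $\Lambda$-submodule of $C_{k-1}\otimes\Lambda$. The Iwasawa algebra $\Lambda$ is Noetherian Auslander-regular of global dimension $d$, and its associated graded with respect to the canonical filtration is a commutative polynomial ring in $d$ variables, by Lazard. Writing $I_i\triangleleft\Lambda$ for the kernel of the augmentation $\Lambda\to\mathbb{Z}_p[G/G_i]$, one checks via rank-nullity and flatness that $\rank_{\mathbb{Q}}(\partial_k^i)$ is captured, up to an error of smaller order, by the $\mathbb{Q}_p$-dimension of $(M_k/I_iM_k)\otimes_{\mathbb{Z}_p}\mathbb{Q}_p$. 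Choose a short exact sequence
\[
 0 \to T_k \to M_k\otimes\mathbb{Q}_p \to F_k \to 0
\]
of $\mathbb{Q}_p\otimes\Lambda$-modules in which $F_k$ has generic rank $r_k$ (pseudo-free part) and $T_k$ is torsion. The pseudo-free part contributes $r_k[G:G_i]$ to the dimension, plus a boundary term of the expected order.

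The main obstacle is the bound $\dim_{\mathbb{Q}_p}(T_k/I_iT_k)=O\!\bigl(p^{(d-1)i}\bigr)$ for a finitely generated torsion $\mathbb{Q}_p\otimes_{\mathbb{Z}_p}\Lambda$-module. This is exactly where the $p$-adic analytic hypothesis is essential: the canonical (Gelfand-Kirillov) dimension of $T_k$ is at most $d-1$, and its Hilbert-Samuel function with respect to the descending chain $\{I_i\}$ is therefore dominated by a polynomial in $p^i$ of degree at most $d-1$, yielding the required $O\!\bigl([G:G_i]^{1-1/d}\bigr)$ bound. Assembling the estimates across all $k$ and appealing to Lück's approximation to identify the constants $r_k$ with $\rank^{(2)}(\partial_k)$ then gives the asymptotic claimed in the theorem.
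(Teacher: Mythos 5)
Your overall strategy is the same as the paper's: reduce the Betti-number asymptotics to a statement about the growth of $\dim(N/I_iN)$ for finitely generated modules $N$ over the Iwasawa algebra $\Lambda=\mathbb{Z}_p[[G]]$, split off the generic-rank part, show that torsion modules contribute only $O(p^{(d-1)i})$, and identify the leading constant with $\beta_k(\overline{X},\overline{\Gamma})$ by L\"uck approximation --- exactly the skeleton of Theorem~\ref{thm:harris} plus the deduction in Section~2. Two steps, however, are asserted rather than proved, and they are precisely where the work lies. First, you replace the cokernels of the differentials by the images $M_k=\im(\partial_k\otimes\Lambda)$. Cokernels commute with the base change $-\otimes_\Lambda \Lambda/I_i$ by right-exactness, but images do not: the natural map $M_k/I_iM_k\to C_{k-1}\otimes\Lambda/I_i$ has kernel a quotient of $\operatorname{Tor}_1^\Lambda\bigl(\coker(\partial_k),\Lambda/I_i\bigr)$, so $\rank_{\mathbb{Q}}(\partial_k^i)$ and $\dim_{\mathbb{Q}_p}\bigl((M_k/I_iM_k)\otimes\mathbb{Q}_p\bigr)$ differ by a term that you would also have to bound by $O([G:G_i]^{1-1/d})$; this is true but needs its own argument (e.g.\ via syzygies), and is avoided entirely by working with cokernels from the start, as in~\eqref{eq:betti by cokernels}.

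Second, and more seriously, the crux of the whole theorem is your sentence that the Hilbert--Samuel function of $T_k$ ``with respect to the descending chain $\{I_i\}$ is therefore dominated by a polynomial in $p^i$ of degree at most $d-1$''. The word ``therefore'' conceals the entire content of Harris's theorem. Canonical dimension $\le d-1$ controls the growth of $\dim(T/\Delta^nT)$ along the filtration by powers of the augmentation (maximal) ideal, via Lazard's identification of $\gr\Lambda$ with a polynomial ring in $d$ variables; it says nothing a priori about the chain $\{I_i\}$, whose members are the (closures of the) ideals generated by $h-1$ with $h\in G_i$ and are not powers of a fixed ideal. The missing link is the comparison $\Delta^{mp^i}\subseteq I_i$ for some fixed $m$ (\cite[Lemma~7.1]{DDMS} in the paper's proof), which converts the $I_i$-filtration into a $\Delta$-adic one at the cost of a bounded rescaling of the degree variable; without it the claimed bound does not follow, and this comparison is exactly the point that caused errors in the original literature. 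If you supply that lemma and either switch to cokernels or bound the extra Tor term, your argument closes up and is essentially the paper's proof, with the explicit monomial count of Theorem~\ref{thm:harris} replaced by an appeal to the dimension theory of Auslander--regular Iwasawa algebras.
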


    The novelty of Theorem~\ref{T1} is obviously the error
    term. For more general covers, this has already been studied by Sarnak and
    Xue~\cite{SarnakXue} and by Clair and Whyte~\cite{ClairWhyte} but they
    obtain much weaker results, in particular their results don't apply when $0$
    occurs in the $L^2$-spectrum of $\overline{X}$.\footnote{We should note however that in the special case of lattices in $\mathrm{SU}(2,1)$ Sarnak and Xue produce a better exponent of $\frac{7}{12}$ in the error term for the first Betti number.} \smallskip\\
    \indent Theorem~\ref{T1} generalizes in the case of trivial
    coefficients the main theorem of Calegari-Emerton~\cite{CE1} which
    deals with arithmetic locally symmetric spaces. After this paper
had been put on the ArXiv Frank Calegari 
    informed us that Theorem~\ref{T1} can be deduced from the method of \cite{CE1}. In fact both proofs rest on a
    theorem of Harris, see Theorem~\ref{thm:harris} below, but we believe that
    our method of proof is somewhat simpler. We refer to 
    Section~\ref{sec: completed homology} for more details on the relation to 
    the work of Calegari-Emerton.

\subsection{Growth of $\IF_p$-Betti numbers in a $p$-adic analytic tower}
Homological algebra over Iwasawa algebras and the theory of $p$-adic analytic groups   provide important tools to study the asymptotic growth of 
Betti numbers 
in a $p$-adic analytic tower of covers. Whilst Iwasawa algebras are hidden in 
the proof of Theorem~\ref{T1}, they are essential even in the formulation of a 
corresponding result for $\IF_p$-Betti numbers. 
The \emph{Iwasawa algebra} of~$G$ over $R=\IF_p$ or $\IZ_p$ is the
completion of the group algebra~$R[G]$:
\[R[[G]]=\varprojlim R[G/G_i].\] The Iwasawa algebra is a right and left
Noetherian domain. Further, if $G$ is torsion-free, then $R[[G]]$ does not
contain zero divisors and its non-zero elements satisfy the Ore condition,
see~\cite[\S 6]{FarkasLinnell}.  This means that the ring of fractions
$Q(R[[G]])$ is a skew field, the \emph{Ore localization} of $R[[G]]$. Hence there 
is a notion of \emph{rank}: 

\begin{definition}
    If $G$ is torsion-free, we define the 
	\emph{rank} of a left $R [[G]]$-module $M$ as 
	\[
		\rank_{R[[G]]}(M)=\dim_{Q(R [[G]])} \bigl(Q(R [[G]])\otimes_{R [[G]]} M\bigr). 
	\]
	For general $G$ we define the \emph{rank} of $M$ as 
	\[
		\rank_{R[[G]]}(M)=\frac{1}{[G:G_0]}\rank_{R[[G_0]]}(M), 
	\]
	where $G_0<G$ is any uniform, hence torsion-free, subgroup, and $M$ is regarded as an $R[[G_0]]$-module by 
	restriction. 
\end{definition}

Using the above rank, we define an analog of $L^2$-Betti numbers in characteristic~$p$. 
For a CW-complex $Y$ the cellular chain complex will always be denoted by $C_\ast(Y)$. 
It is a consequence of the proof of Theorem~\ref{T1} (see~
\eqref{eq:for both}) that, if you replace 
$\IF_p$ by $\IZ_p$ in the definition below, you obtain the $L^2$-Betti numbers of 
$\bar X$.

\begin{definition}
	The \emph{mod $p$ $L^2$-Betti numbers} of the $\bar\Gamma$-space $\bar X$ are defined as 
	\[\beta_k (\overline{X} , \overline{\Gamma} ; \mathbb{F}_p) 
= \rank_{\mathbb{F}_p [[G]]} (H_k (\IF_p[[G]]\otimes_{\IF_p[\bar\Gamma]} C_\ast(\overline{X},\IF_p))),\]
	where $\IF_p[[G]]$ is regarded as a right $\IF_p[[\bar\Gamma]]$-module via $\phi:\Gamma\to G$. 
\end{definition}

For these characteristic~$p$ analogs of $L^2$-Betti numbers there is an approximation result 
similar to Theorem~\ref{T1}:
\begin{thm} \label{T2} Let $d= \dim (G)$. Then for any integer $k$ and as $i$
  tends to infinity, we have:
$$b_k (X_i ; \mathbb{F}_p ) = \beta_k (\overline{X} , \overline{\Gamma} ; \mathbb{F}_p ) [\Gamma : \Gamma_i] 
+ O\left([\Gamma : \Gamma_i]^{1-{1/d}} \right).$$
In particular, the limit of the sequence $b_k (X_i ;
\mathbb{F}_p)/[\Gamma:\Gamma_i]$ exists and is equal to $\beta_k (\overline{X} ,
\overline{\Gamma} ; \mathbb{F}_p)$.
\end{thm}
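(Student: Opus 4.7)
The plan is to parallel the proof of Theorem~\ref{T1}, working over the Iwasawa algebra $\IF_p[[G]]$ in place of its characteristic-zero analog. First I would lift the cellular chain complex to $C_\bullet = \IF_p[[G]] \otimes_{\IF_p[\bar\Gamma]} C_\ast(\overline{X}, \IF_p)$, a bounded chain complex of finitely generated free left $\IF_p[[G]]$-modules, so that $C_\ast(X_i, \IF_p) \cong \IF_p[G/G_i] \otimes_{\IF_p[[G]]} C_\bullet$ and, by definition of the mod~$p$ $L^2$-Betti numbers, $\rank_{\IF_p[[G]]} H_q(C_\bullet) = \beta_q(\overline{X}, \overline{\Gamma}; \IF_p)$.

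The core input is the appropriate $\IF_p[[G]]$-version of Theorem~\ref{thm:harris}: for every finitely generated left $\IF_p[[G]]$-module $M$,
\[
\dim_{\IF_p}\bigl(\IF_p[G/G_i] \otimes_{\IF_p[[G]]} M\bigr) = \rank_{\IF_p[[G]]}(M)\cdot[G:G_i] + O\bigl([G:G_i]^{1-1/d}\bigr).
\]
Combined with the fact that $\IF_p[[G_0]]$ has finite global dimension for $G_0$ a uniform open subgroup of $G$ (the harmless reduction implicit in the definition of rank for possibly non-torsion-free $G$), an induction on projective dimension---using a short exact sequence $0\to M'\to F_0\to M\to 0$ with $F_0$ free of finite rank, the long exact $\operatorname{Tor}$ sequence, and the shift $\operatorname{Tor}_p(N,M)\cong \operatorname{Tor}_{p-1}(N,M')$ for $p\ge 2$---yields the companion estimate
\[
\dim_{\IF_p}\operatorname{Tor}_p^{\IF_p[[G]]}\bigl(\IF_p[G/G_i], M\bigr) = O\bigl([G:G_i]^{1-1/d}\bigr) \quad (p\ge 1).
\]

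The hyper-$\operatorname{Tor}$ spectral sequence
\[
E^2_{p,q} = \operatorname{Tor}_p^{\IF_p[[G]]}\bigl(\IF_p[G/G_i], H_q(C_\bullet)\bigr) \Longrightarrow H_{p+q}(X_i, \IF_p),
\]
available because $C_\bullet$ consists of free $\IF_p[[G]]$-modules, then packages both inequalities. The upper bound $b_k(X_i, \IF_p)\le \sum_{p+q=k}\dim E^2_{p,q} = \beta_k[G:G_i] + O([G:G_i]^{1-1/d})$ is immediate since $E^\infty$ is a subquotient of $E^2$ entry by entry. For the matching lower bound, $E^\infty_{0,k}$ is the quotient of $E^2_{0,k}$ by the images of the differentials $d_r\colon E^r_{r,k-r+1}\to E^r_{0,k}$ with $r\ge 2$, whose total dimension is bounded by $\sum_{r\ge 2}\dim E^2_{r,k-r+1} = O([G:G_i]^{1-1/d})$; hence $b_k(X_i, \IF_p)\ge \dim E^\infty_{0,k}\ge \beta_k[G:G_i] - O([G:G_i]^{1-1/d})$. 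Using $[G:G_i] = [\Gamma:\Gamma_i]$ concludes the proof. The single non-formal step is the Harris-type estimate, where the $p$-adic analytic hypothesis and the exponent $1-1/d$ genuinely enter; everything after it is formal homological algebra over the Iwasawa algebra.
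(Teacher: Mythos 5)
Your argument is correct, but it reaches the conclusion by a genuinely different route from the paper. The paper never invokes higher $\operatorname{Tor}$ groups or a spectral sequence: instead it expresses both $\rank_{\IF_p[[G]]}H_k(\hat C_\ast)$ and $b_k(X_i;\IF_p)$ as
$\rank(\coker\hat\partial_k)+\rank(\coker\hat\partial_{k+1})-r_{k-1}$ (respectively its $\dim_{\IF_p}$ analogue) by additivity/rank--nullity, uses right-exactness of $\otimes$ to commute $\coker$ with the base change to $\IF_p$, and then applies Theorem~\ref{thm:harris} only to the two finitely generated modules $\coker(\partial_k)$ and $\coker(\partial_{k+1})$; the error term thus enters twice and nowhere else. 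You instead apply Harris directly to the homology modules $H_q(C_\bullet)$, upgrade it by dimension shifting to $\dim_{\IF_p}\operatorname{Tor}_p^{\IF_p[[G]]}(\IF_p[G/G_i],M)=O([G:G_i]^{1-1/d})$ for $p\ge 1$ (note that finite global dimension is not actually needed here, since syzygies of finitely generated modules over the Noetherian ring $\IF_p[[G]]$ are again finitely generated and the induction is on the fixed homological degree $p$), and assemble the estimate through the hyperhomology spectral sequence, reading the main term off $E^2_{0,k}$ and absorbing everything else into the error. Both arguments rest on exactly the same non-formal input; the paper's version is more elementary and treats $R=\IZ_p$ and $R=\IF_p$ uniformly, while yours isolates more clearly \emph{where} the error lives (in the higher $\operatorname{Tor}$ of the homology modules) and would adapt to more general coefficient complexes. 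Two small points you should still record: the identification $C_\ast(X_i;\IF_p)\cong \IF_p[G/G_i]\otimes_{\IF_p[[G]]}C_\bullet$ requires the isomorphism $\IF_p\otimes_{\IF_p[\Gamma_i]}\IF_p[\Gamma]\cong\IF_p\projtensor_{\IF_p[[G_i]]}\IF_p[[G]]$, to which the paper devotes an explicit argument via the universal property of the completed group algebra; and your ordinary tensor product over $\IF_p[[G]]$ agrees with the completed one appearing in Theorem~\ref{thm:harris} only because the modules involved are finitely generated over a Noetherian compact ring, so the relevant submodules are automatically closed.
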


Here again Calegari informed us that Theorem \ref{T2} can be deduced from his joint
ongoing work with Emerton on {\it completed cohomology}. In fact one key feature of their theory 
is to set up the right framework to determine the growth rate of (mod $p$) Betti numbers even 
if the corresponding (mod $p$) $L^2$-Betti number vanishes. Proving unconditional results seems difficult; we nevertheless
point out that when $X$ is $3$-dimensional the main result of \cite{CE2} implies in particular that the error term
in Theorem \ref{T2} is the best possible in general. We also note that -- in his PhD-thesis~\cite[Theorem~5.3.1]{WallLiam} -- Liam Wall had first constructed examples of $p$-adic analytic towers of covers of a finite volume hyperbolic $3$-manifold which shows that one cannot replace the error term $O([\Gamma:\Gamma_i]^{1-1/d})$ by $O([\Gamma:\Gamma_i]^{1-1/d-\epsilon})$ for some $\epsilon>0$.

We may have $\beta_k (\overline{X} , \overline{\Gamma} ) \neq \beta_k
(\overline{X} , \overline{\Gamma} ; \mathbb{F}_p )$. An example is given
in~\cite[Example 6.2]{LLS}. One 
can even construct an example with $X$ being a \emph{manifold} (see 
Section~\ref{sec: links}):

\begin{prop}\label{prop: link complement}
  There exists a link complement $X$ and a sequence of $p$-covers $X_i$ of $X$
  such that
$$\lim_{i \rightarrow +\infty} \frac{\dim H_1 (X_i , \mathbb{F}_p)}{[\Gamma: \Gamma_i]} 
\neq \lim_{i \rightarrow +\infty} \frac{\dim H_1 (X_i , \Q)}{[\Gamma: \Gamma_i]}.$$
\end{prop}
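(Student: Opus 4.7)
By Theorems~\ref{T1} and~\ref{T2}, both normalized Betti-number sequences converge to the corresponding $L^2$-Betti numbers $\beta_1(\bar X,\bar\Gamma)$ and $\beta_1(\bar X,\bar\Gamma;\IF_p)$, so it suffices to construct a link $L\subset S^3$ together with a $p$-adic analytic covering tower of $X=S^3\setminus L$ for which these two $L^2$-Betti numbers differ. The plan is to take $L$ with $r\ge 2$ components, so that $\Gamma^{\mathrm{ab}}\cong\IZ^r$, and let $\phi\colon\Gamma\twoheadrightarrow\IZ^r\hookrightarrow\IZ_p^r=G$ be the abelianization composed with the canonical inclusion. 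Then $\bar X$ is the universal abelian cover of $X$, $\bar\Gamma=\IZ^r$, $\dim(G)=r$, and $H_1(\bar X;R)$ is the Alexander module of $L$ with coefficients in $R$.

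In this set-up both $L^2$-Betti numbers become computable from the Alexander matrix $M$ of $L$ with entries in $\IZ[t_1^{\pm1},\ldots,t_r^{\pm1}]$: $\beta_1(\bar X,\bar\Gamma)$ is the corank of $M$ over the fraction field $\IQ(t_1,\ldots,t_r)$, while $\beta_1(\bar X,\bar\Gamma;\IF_p)$ is the corank of $M$ over the fraction field of the completion $\IF_p[[T_1,\ldots,T_r]]$ (via the substitution $t_i=1+T_i$). These two coranks differ precisely when the rank of $M$ drops modulo~$p$, i.e.\ when every nonzero maximal minor of $M$ lies in $p\IZ[t_1^{\pm1},\ldots,t_r^{\pm1}]$; equivalently, when the first Alexander ideal $E_1(L)$ is nonzero but contained in $p\IZ[t_1^{\pm1},\ldots,t_r^{\pm1}]$.

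It then remains to exhibit an explicit link with this divisibility property---for example, a multi-component link with all pairwise linking numbers divisible by~$p$, built by satellite operations on the Hopf link and arranged so that the first Alexander ideal lies entirely in $p\IZ[t_1^{\pm1},\ldots,t_r^{\pm1}]$---and to verify the divisibility by direct Fox calculus on a diagram. The main obstacle is producing and checking such a concrete link: multivariable link Alexander polynomials obey Torres-type constraints and avoiding cancellations among the maximal minors requires care. Realizability theorems for multivariable Alexander polynomials should guarantee existence if a direct construction proves elusive; once such an $L$ is in hand, Theorems~\ref{T1} and~\ref{T2} immediately yield the desired inequality of normalized limits.
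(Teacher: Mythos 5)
Your reduction of the problem to a statement about Alexander invariants is sound and matches the paper's framework (Section~\ref{sec: links} of the paper carries out exactly the identification of $\beta_1(\overline{X},\overline{\Gamma};\IF_p)$ with the rank of the Alexander module over $\IF_p[t_1^{\pm1},\ldots,t_d^{\pm1}]$). But the proof is not complete: the entire content of the proposition is the \emph{existence} of a link with the required divisibility property, and you explicitly defer this --- ``realizability theorems \ldots should guarantee existence if a direct construction proves elusive'' is a hope, not an argument. Realizing a nonzero first Alexander ideal contained in $p\,\IZ[t_1^{\pm1},\ldots,t_r^{\pm1}]$ for the \emph{full} abelianization tower is genuinely constrained (Torres conditions, symmetry, and the relation $\Delta(1,1)=\pm\mathrm{Lk}(l_1,l_2)$ all restrict what multivariable Alexander polynomials occur), and you give no candidate.

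The paper closes this gap with a trick you miss by insisting on the full abelianization $\Gamma\twoheadrightarrow\IZ^r$: the proposition only asks for \emph{some} tower of $p$-covers, so one is free to use the rank-one quotient $\Gamma\to\IZ^2\to\IZ^2/\langle a-b\rangle\cong\IZ$ and the tower coming from $\IZ\hookrightarrow\IZ_p$. The relevant invariant for that tower is the one-variable polynomial $\Delta(t,t)$, and Hosokawa's realization theorem~\cite{Hosokawa} produces a $2$-component link with $\Delta(t,t)=p$ exactly. Then $\Delta(t,t)\neq 0$ forces the rational normalized limit to vanish, while $\Delta(t,t)\equiv 0 \pmod p$ forces the mod~$p$ limit to be positive. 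Note that this choice of quotient is not cosmetic: for a link with $\Delta(t_1,t_2)\equiv (t_1-t_2)g(t_1,t_2) \pmod p$ the full abelian tower need not exhibit any discrepancy at all, even though the diagonal tower does. If you want to salvage your route, you must either produce an explicit link whose first elementary ideal lies in $p\,\IZ[t_1^{\pm1},\ldots,t_r^{\pm1}]$ and verify it by Fox calculus, or, more economically, adopt the diagonal reduction and quote Hosokawa.
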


We don't know of any example with $\overline{X}$ being aspherical.

\subsection{Beyond $p$-adic analytic groups} 
\label{sub:beyond_p_adic_analytic_groups}

The following theorem about arbitrary pro-p towers is certainly known to some experts but 
we could find no proof in the literature except in degree one.

\begin{thm}\label{thm: general existence}
	Let $k$ be a field of characteristic $p>0$. 
	Let $X$ be a compact connected CW-complex with $\Gamma$ as fundamental group. 
        Let $(\Gamma_i)_{i\ge 0}$ be a residual $p$-chain. 
	We denote the finite cover of $X$ associated to $\Gamma_i$ by $X_i$. 
	Then, for any $n\ge 0$, the sequence of normalized Betti numbers with $k$-coefficients 
	\[
		\Bigl(\frac{b_n(X_i;k)}{[\Gamma:\Gamma_i]}\Bigr)_{i\ge 0}
	\]
	is monotone decreasing and converges as $i\to\infty$. 
\end{thm}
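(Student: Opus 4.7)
The plan is to prove that the sequence $(b_n(X_i;k)/[\Gamma:\Gamma_i])_{i\ge 0}$ is monotone decreasing; since it is bounded below by~$0$, convergence then follows automatically from the monotone convergence theorem. Writing $P_i:=\Gamma_i/\Gamma_{i+1}$, which is a finite $p$-group (as a subgroup of the finite $p$-group $\Gamma/\Gamma_{i+1}$), the monotonicity inequality at step~$i$ is equivalent to
\begin{equation*}
	b_n(X_{i+1};k)\le |P_i|\cdot b_n(X_i;k).
\end{equation*}
The whole statement thus reduces to the following local assertion: for any regular cover $Y\to Z$ of finite CW-complexes whose deck group is a finite $p$-group $P$, one has $\dim_k H_n(Y;k)\le |P|\cdot\dim_k H_n(Z;k)$ for every~$n$.

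To prove the local assertion, consider $C_\ast:=C_\ast(Y;k)$. Because $P$ acts freely and cellularly on~$Y$, $C_\ast$ is a bounded chain complex of finitely generated free $kP$-modules, with $C_\ast(Z;k)\cong k\otimes_{kP}C_\ast$ canonically. The key structural input is that $kP$ is a \emph{local Artinian ring}: since $P$ is a finite $p$-group and $\mathrm{char}(k)=p$, the augmentation ideal of $kP$ equals its Jacobson radical, and the residue field is~$k$. Over any such local ring, any bounded chain complex of finitely generated free modules is chain-homotopy equivalent to a \emph{minimal} complex $M_\ast$, characterized by the vanishing of the induced differentials on $k\otimes_{kP}M_\ast$. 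This is classical and can be established by iterated Gaussian elimination: whenever some matrix entry of $\partial$ is a unit modulo the radical, a suitable change of basis splits off a contractible summand $kP\xrightarrow{=}kP$; since the complex has finite total $k$-dimension, the process terminates.

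With $C_\ast$ replaced by its minimal model, both sides of the desired inequality become transparent. On the one hand, because the differentials of $k\otimes_{kP}C_\ast$ now vanish identically,
\begin{equation*}
	b_n(Z;k)=\dim_k H_n(k\otimes_{kP}C_\ast)=\dim_k(k\otimes_{kP}C_n)=\mathrm{rk}_{kP}(C_n).
\end{equation*}
On the other hand, $H_n(Y;k)=H_n(C_\ast)$ is a subquotient of the $k$-vector space $C_n$, which has dimension $|P|\cdot\mathrm{rk}_{kP}(C_n)$. Combining the two estimates yields $b_n(Y;k)\le|P|\cdot b_n(Z;k)$, as required.

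The main obstacle I anticipate is producing (or pinpointing a reference for) the minimal model; everything else is formal. I would also emphasize that the argument only uses that each $\Gamma_i/\Gamma_{i+1}$ is a finite $p$-group, and not the residual hypothesis $\bigcap_i\Gamma_i=\{1\}$. The latter hypothesis enters the theorem only to guarantee that the common limit is a genuine invariant of $(X,\Gamma)$ rather than of the tail of the specific chain.
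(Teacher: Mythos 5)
Your argument is correct, and it reaches the conclusion by a route that is organized differently from the paper's, although both ultimately rest on the same key fact: for a finite $p$-group $P$ and $\operatorname{char}(k)=p$, the group algebra $k[P]$ is local with residue field $k$, so Nakayama's lemma applies. The paper does not prove your ``local assertion'' about a single covering step directly. Instead it first rewrites $b_n(X_i;k)/[\Gamma:\Gamma_i]$ as a sum of normalized dimensions of $k\otimes_{k[\Gamma_i]}\coker(\partial_n)$ and $k\otimes_{k[\Gamma_i]}\coker(\partial_{n+1})$ minus a constant, and then proves a module-level monotonicity lemma: for any finitely presented $k[\Gamma]$-module $M$ and normal $\Lambda\le\Gamma$ of $p$-power index, $\dim_k(k\otimes_{k[\Lambda]}M)\le[\Gamma:\Lambda]\dim_k(k\otimes_{k[\Gamma]}M)$, proved by lifting a basis of the image of a presentation matrix and invoking Nakayama. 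You instead prove the Betti-number inequality $b_n(X_{i+1};k)\le[\Gamma_i:\Gamma_{i+1}]\,b_n(X_i;k)$ in one stroke by passing to a minimal model of $C_\ast(X_{i+1};k)$ over the local Artinian ring $k[\Gamma_i/\Gamma_{i+1}]$; the existence of the minimal model via Gaussian elimination is classical and your sketch of it is adequate. Your packaging is arguably more transparent and avoids the cokernel bookkeeping; what the paper's formulation buys is a reusable statement about arbitrary finitely presented modules, which it needs again later (in the proof of Proposition~\ref{prop:special tower}, where the same lemma is applied to modules not arising as chain groups). Your closing remark is also consistent with the paper: neither proof uses $\bigcap_i\Gamma_i=\{1\}$; only the normality of the $\Gamma_i$ in $\Gamma$ and the $p$-power indices are needed for monotonicity and convergence.
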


We moreover prove that the limit is an
  integer in many situations, see 
Theorem~\ref{thm:limit for residually torsionfree nilpotent coverings} and the remark following it.

\subsection{Acknowledgments} 
\label{sub:acknowledgments}

We thank the referee for a detailed and helpful report, especially for spotting  
an error in a previous version of the proof of Theorem~\ref{thm:harris}, which is 
now corrected. 

Work on this project was supported by the Leibniz Award of W.L. granted by the DFG. 
N.B. is a member of the Institut Universitaire de France.
P.L. was partially supported by a grant from the NSA. 
R.S. thanks the Mittag-Leffler institute for its hospitality during the final stage of this project and acknowledges support by grant SA 1661/3-1 of the DFG. 

\section{Proof of Theorem~\ref{T1} and~\ref{T2}}

In the sequel we treat the cases $R=\IZ_p$ and $R=\IF_p$ simultaneously.
Depending on which case, we denote by $\dim_R(M)$ either the dimension of a
$\IF_p$-vector space or the $\IZ_p$-rank of a $\IZ_p$-module, which equals the
dimension of the $\IQ_p$-vector space $\IQ_p\otimes_{\IZ_p} M$.

As in the work of Calegari-Emerton and Emerton~\cite{CE1, CE2} the following
result of M.~Harris~\cite[Theorem~1.10]{Harris} for which corrections appear 
in~\cite{Harris-correction}, is crucial. Although not explicitly stated as such,
a proof is also contained in the work of
Farkas-Linnell~\cite{FarkasLinnell}. We give a
complete proof blending ideas from both Farkas-Linnell's and Harris' papers.

\begin{thm}[Harris]\label{thm:harris} Let $R=\IZ_p$ or $R=\IF_p$. 
	Let $M$ be a finitely generated $R[[G]]$-module. Then 
	\begin{equation}\label{eq:assertion}
		\dim_{R} \bigl(R \projtensor_{R[[G_i]]} M\bigr)=\rank_{R[[G]]}(M)[G:G_i]+O([G:G_i]^{1-1/d}).
	\end{equation} 
	Here $\projtensor$ denotes the completed tensor product. 
\end{thm}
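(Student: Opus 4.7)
The plan is to prove Theorem~\ref{thm:harris} through a sequence of reductions ending in a Hilbert--Samuel style estimate for torsion modules over a graded polynomial ring.

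\textbf{Reductions.} First, I would pass to an open uniform pro-$p$ subgroup $G_0\le G$ of finite index: since $R[[G]]$ is free of rank $[G:G_0]$ over $R[[G_0]]$ and the assertion concerns the behaviour as $i\to\infty$ (so eventually $G_i\le G_0$), both sides of~\eqref{eq:assertion} rescale compatibly and the statement for $G$ follows from the statement for $G_0$. Second, I would reduce the case $R=\IZ_p$ to the case $R=\IF_p$ by applying $R\projtensor_{R[[G_i]]}(-)$ to the short exact sequences $0\to M[p]\to M\xrightarrow{p}pM\to 0$ and $0\to pM\to M\to M/pM\to 0$: both $M/pM$ and $M[p]$ are finitely generated $\IF_p[[G]]$-modules whose $\IF_p[[G]]$-ranks reconstruct $\rank_{\IZ_p[[G]]}(M)$, and their coinvariants control the $\IZ_p$-dimension of $\IZ_p\projtensor_{\IZ_p[[G_i]]}M$ up to $p$-torsion terms which are themselves of the same form. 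From now on, assume $G$ is uniform of dimension $d$ and $R=\IF_p$.

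\textbf{Dévissage to torsion modules.} Since $\IF_p[[G]]$ is a Noetherian Ore domain, every finitely generated module $M$ of rank $r$ fits into an exact sequence
\[
0\to N\to \IF_p[[G]]^r\to M\to C\to 0
\]
with $N$ and $C$ torsion. Because $\IF_p\projtensor_{\IF_p[[G_i]]}\IF_p[[G]]^r$ has $\IF_p$-dimension exactly $r[G:G_i]$, a standard devissage (using right exactness of $\IF_p\projtensor_{\IF_p[[G_i]]}(-)$ and keeping track of the finitely many Tor-groups, which are themselves finitely generated torsion modules) reduces Theorem~\ref{thm:harris} to the bound
\[
\dim_{\IF_p}\bigl(\IF_p\projtensor_{\IF_p[[G_i]]}T\bigr)=O\bigl([G:G_i]^{1-1/d}\bigr)
\]
for every finitely generated torsion $\IF_p[[G]]$-module $T$.

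\textbf{Hilbert--Samuel estimate for torsion modules.} For $G$ uniform of dimension $d$, Lazard's theory equips $\IF_p[[G]]$ with the Jennings (or dimension) filtration, whose associated graded ring is the polynomial algebra $\IF_p[X_1,\dots,X_d]$; moreover $[G:G_i]=p^{id}$ and the ideal $I_{G_i}$ generated by the augmentation ideal of $\IF_p[[G_i]]$ contains a suitable power $I^{cp^i}$ of the augmentation ideal $I\subset\IF_p[[G]]$. Equipping $T$ with a good filtration makes $\gr T$ a finitely generated graded module over $\IF_p[X_1,\dots,X_d]$; since $T$ is Ore-torsion, $\gr T$ has Krull dimension at most $d-1$, hence its Hilbert--Samuel polynomial has degree $\le d-1$, giving $\dim_{\IF_p}(T/I^nT)=O(n^{d-1})$. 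Specialising to $n\asymp p^i$ and comparing $I^{cp^i}\subseteq I_{G_i}$ yields the desired $O(p^{i(d-1)})=O([G:G_i]^{1-1/d})$ bound.

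\textbf{Main obstacle.} The principal technical difficulty is the noncommutative-to-commutative transfer at the heart of the torsion estimate: one must show that Ore-torsion over $\IF_p[[G]]$ implies a strict drop in Krull dimension of $\gr T$. The cleanest form of this link is the identity between the Ore rank of $M$ and the normalised leading coefficient of the Hilbert--Samuel polynomial of $\gr M$, which is a central result of Farkas--Linnell and requires genuinely noncommutative input beyond the commutative Hilbert--Samuel machinery. A secondary obstacle is the precise comparison between the subgroup filtration $(G_i)$ and the $I$-adic filtration on $\IF_p[[G]]$, for which Lazard's theory of $p$-valuable groups and the explicit structure of uniform pro-$p$ groups are indispensable.
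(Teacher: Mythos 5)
Your outline is a correct route to the theorem, but it differs from the paper's proof at the two decisive points. First, your d\'evissage uses a syzygy sequence $0\to N\to \IF_p[[G]]^r\to M\to C\to 0$ with $N,C$ torsion, whereas the paper diagonalizes a presentation matrix over the Ore skew field $Q(\IF_p[[G]])$, clears denominators, and compares kernels and cokernels of the reduced matrices directly; both reduce the theorem to a bound on torsion modules. Be aware, though, that your parenthetical claim that the relevant Tor-groups "are themselves finitely generated torsion modules" is not right as stated: $\operatorname{Tor}_1^{\IF_p[[G_i]]}(\IF_p,C)$ is just an $\IF_p$-vector space, and bounding its dimension by $O([G:G_i]^{1-1/d})$ requires an extra loop --- one first establishes the \emph{upper} bound for all finitely generated modules (which needs only right-exactness plus the torsion estimate), applies it to a syzygy of $C$, and only then deduces the lower bound for $M$. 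This detour is exactly what the paper's auxiliary matrix $F=E\cdot(bB)$ with $\im(r_{F_i})\subseteq\ker(r_{A_i})$ accomplishes concretely. Second, and more importantly, your torsion estimate invokes the general statement that Ore-torsion forces $\gr T$ to have Krull dimension $\le d-1$ (equivalently, the Farkas--Linnell identification of the Ore rank with the normalized leading Hilbert--Samuel coefficient); you correctly flag this as the crux but then cite it rather than prove it. The paper avoids needing this general statement altogether: its matrix reduction lands on \emph{cyclic} modules $\coker(\,\cdot\,a)$ with $a\ne 0$, for which $\dim_{\IF_p}(M/\Delta^n M)=\dim_{\IF_p}(\Delta^{n-s}/\Delta^n)$ is computed exactly from the fact that $\gr\IF_p[[G]]$ is a polynomial ring (hence a domain), giving the difference of two binomial coefficients. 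So the paper's argument is self-contained and elementary where yours defers to the literature, while yours is arguably more conceptual in that it treats arbitrary torsion modules uniformly. Your reductions to a uniform subgroup and from $\IZ_p$ to $\IF_p$ are in the same spirit as the paper's (the paper removes $p$-torsion and uses a finite free resolution to show $\rank_{\IF_p[[G]]}(M/pM)=\rank_{\IZ_p[[G]]}(M)$; your two short exact sequences would need a comparable amount of care to make rigorous).
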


\begin{proof} Passing to a finite index subgroup of $G$ we may, and shall, assume that $G$ is uniform and torsion-free. The proof then proceeds through a sequence of reductions.

\emph{Reduction to the case of cokernels of elements in~$R[[G]]$}. 
We first
 show that it suffices to show the theorem for $R[[G]]$-modules of the form
 \begin{equation}\label{eq:one-dimensional case}
   M=\coker\bigl(R[[G]]\xrightarrow{\_\cdot a}R[[G]]\bigr), 
 \end{equation}
 with $a\in R[[G]]$. Let $N$ be an arbitrary finitely generated
 $R[[G]]$-module. Since $R[[G]]$ is Noetherian, $N$ is finitely
 presented and we can find a matrix $A \in M(r\times s, R[[G]])$ such that
 \[
 N=\coker\bigl( R[[G]]^r\xrightarrow{r_A}R[[G]]^s\bigr), 
 \]
 where $r_A$ denotes the right multiplication 
 \[
 	r_A(x_1,\ldots, x_r)=(x_1,\ldots, x_r)\cdot A
 \]
with $A$. 
       Since the Ore localization $Q(R[[G]])$ is a skew field, 
	by row and
	column reduction in $M_r(Q(R[[G]]))$ one can find invertible matrices $B\in M(r\times r, Q(R[[G]]))$ and 
	$C\in M(s\times s, Q(R[[G]]))$ such that $D:=BAC$ is a block matrix of the form 
	\begin{align}\label{eq: the matrix D}
	    D=\begin{pmatrix} 
						I_\delta & 0_{\delta, s-\delta} \\
						0_{r-\delta, \delta}         & 0_{r-\delta\times s-\delta}
				\end{pmatrix}\in M(r\times s, R[[G]]),
	\end{align}
	where $I_\delta\in M(\delta\times\delta, R[[G]])$ is the identity matrix with $\delta\in\{0,\ldots, s\}$ and the other blocks are 
	suitable zero matrices. In other words, $D$ describes the projection onto the first $\delta$ coordinates
       $(x_1, x_2, \ldots , x_r) \mapsto (x_1, x_2, \ldots, x_{\delta}, 0,0,\ldots )$. 
       Since $B, C$ are invertible, we have $$\rank_{R[[G]]}(N)=s-\delta.$$
	There are
	nonzero $b,c\in R[[G]]$ such that $bB, Cc$ are matrices over $R[[G]]$.  We have 
	\begin{equation}\label{eq: the matrix D after killing denominators}
		(bB)A(Cc)=\begin{pmatrix} 
						bc\cdot I_\delta & 0_{\delta, s-\delta} \\
						0_{r-\delta, \delta}         & 0_{\delta\times\delta}
				\end{pmatrix}.
	\end{equation}
    Let $A_i, B_i$, and $C_i$ be the mod $G_i$ reductions of $A, bB$, and $Cc$. 
Because of $\ker(r_{A_i})\subset \ker(r_{C_i}\circ r_{A_i})=\ker(r_{A_i C_i})$ one obtains 
$\dim_{R}\ker(r_{A_i})\le\dim_{R}\ker(r_{A_iC_i})$. Because of $\im(r_{B_iA_i
C_i})=\im(r_{A_iC_i}\circ r_{B_i})\subset\im(r_{A_iC_i})$ we have
$\dim_{R}\ker(r_{A_iC_i})\le\dim_{R}\ker(r_{B_iA_iC_i})$. Therefore:
$\dim_{R}\ker(r_{A_i})\le\dim_{R}\ker(r_{B_iA_iC_i})$.  Assuming the theorem is
proved for modules as in~\eqref{eq:one-dimensional case}, this implies that
	\begin{align}\label{eq: upper estimate}
		\dim_{R} (R\projtensor_{R[[G_i]]}N)=\dim_{R} \coker(r_{A_i})
										&=\dim_{R} \ker(r_{A_i})-(r-s)[G:G_i]\notag\\
										&\le\dim_{R}\ker(r_{B_iA_iC_i})-(r-s)[G:G_i]\notag\\
										&=\dim_{R}\coker(r_{B_iA_iC_i})\\
										&=(s-\delta)\cdot [G:G_i]+O([G:G_i]^{1-1/d}). \notag
	\end{align}
	
	To prove the assertion for $N$, under the assumption that it holds for
        modules as in~\eqref{eq:one-dimensional case}, it remains to show that
      
	\begin{equation}\label{eq: lower estimate}
		\dim_{R} \coker(r_{A_i}) \ge (s-\delta) [G:G_i] - O([G:G_i]^{1-1/d}). 
	\end{equation}
        Let $E \in M((r-\delta)\times r, R[[G]])$ be the matrix such that 
\[
	R[[G]]^{r-\delta}\xrightarrow{r_E}R[[G]]^r, ~~(y_1, y_2, \ldots , y_{r-\delta})\cdot E=(0,\ldots, 0, y_1, y_2, \ldots,  y_{r-\delta}).
\]
        Let $F \in M((r-\delta)\times r, R[[G]])$ be the matrix $E\cdot (bB)$. 
        The same argument as before
        leading to~\eqref{eq: upper estimate} but now applied to $F$ shows
        \begin{equation}\label{eq:second_upper estimate}
			 \dim_{R} \coker(r_{F_i})
                          \le \delta\cdot [G:G_i]+O([G:G_i]^{1-1/d}).
	\end{equation}
        We have $0=r_D\circ r_E=r_{ED}$, yielding $ED=0$ and 
\[
	FAC=E(bB)AC=E(bI_r)BAC=(bI_r)EBAC=(bI_{r-\delta})ED=0. 
\]
Note that we used $E(bI_r)=(bI_{r-\delta})E$ here. 
        As $C$ is invertible this implies $FA=0$ and $r_A\circ r_F=r_{FA}=0$. 
        In particular, $r_{A_i}\circ r_{F_i} = 0$ and hence $\im(r_{F_i}) \subseteq \ker(r_{A_i})$. So 
        $\dim_{R} \im(r_{F_i}) \le \dim_{R} \ker(r_{A_i})$. We compute
        \begin{eqnarray*}
        \lefteqn{\dim_{R} \coker(r_{A_i}) + \dim_{R} \coker(r_{F_i})}
        & & 
        \\≈‚
        & = & 
        s \cdot [G:G_i]- \dim_{R} \im(r_{A_i}) + r \cdot [G:G_i]- \dim_{R} \im(r_{F_i})
        \\
        & \ge &
        s \cdot [G:G_i] - \dim_{R} \im(r_{A_i}) + r \cdot [G:G_i]- \dim_{R} \ker(r_{A_i})
        \\
        & = & 
        s \cdot [G:G_i].
      \end{eqnarray*}
      Now~\eqref{eq: lower estimate} follows from~\eqref{eq:second_upper estimate}.
      \smallskip \\
    \emph{Reduction to the case $R=\IF_p$.} 
        To prove the statement for a finitely generated $\IZ_p[[G]]$-module $M$ 
	  we may assume that $\rank_{\IZ_p[[G]]}(M)=0$ due to the reduction to the 
	  case~\eqref{eq:one-dimensional case} and the fact that $\IZ_p[[G]]$ has no 
	zero-divisors. For the following reason we may, in addition, assume that $M$ has no $p$-torsion: 
	Let $T = \{ m \in M \big| \exists_{d(m) \in \N}~~ p^{d(m)} \cdot m =0 \}$ be
	  its $p$-torsion part. Obviously, $T$ is a $\IZ_p [[G]]$-submodule of $M$. One easily sees by
	  additivity of dimension that 
	  \[
	  \dim_{\IQ_p}(\IQ_p\projtensor_{\IZ_p[[G_i]]} M/T)=
	  \dim_{\IQ_p}(\IQ_p\projtensor_{\IZ_p[[G_i]]} M)\text{ and }
	  \rank_{\IZ_p[[G]]}(M)= \rank_{\IZ_p[[G]]}(M/T).
	  \]
	  Hence we may assume that $M$ has no $p$-torsion. We prove now that 
	  \begin{align}\label{eq: F_p dimension equals Z_p dimension}
	  \rank_{\IF_p[[G]]}(M/pM)&=\rank_{\IZ_p[[G]]}(M), 
	    \end{align}
	  hence both are zero. 
	  Since the ring $\IZ_p[[G]]$ has finite projective dimension~\cite[Section~5.1]{ardakov+brown} and every 
	  projective $\IZ_p[[G]]$-module is free~\cite[Corollary~7.5.4 on p.~127]{Wilson} and $\IZ_p[[G]]$ is Noetherian, 
	  the finitely generated $\IZ_p[[G]]$-module $M$ possesses a finite resolution by finitely generated free $\IZ_p[[G]]$-modules: 
	\[
		0\to F_n\to F_{n-1}\to\ldots \to F_0\to M\to 0.
	\]
	  Applying the functor $N\mapsto \IF_p\otimes_{\IZ}N\cong N/pN$ yields a resolution of $M/pM$ by finitely generated, free 
	  $\IF_p[[G]]$-modules since $M$ has no $p$-torsion: 
	\[
		0\to \IF_p\otimes_{\IZ}F_n\to\ldots\to  \IF_p\otimes_{\IZ}F_0\to M/pM\to 0. 
	\]
	Now equation \eqref{eq: F_p dimension equals Z_p dimension} follows since the rank functions over $\IF_p[[G]]$ and 
	$\IZ_p[[G]]$ are additive and the equation obviously holds for finitely generated free $\IZ_p[[G]]$-modules.

    Let $N=\IZ_p\projtensor_{\IZ_p[[G_i]]} M$. Because of $\rank_{\IZ_p[[G]]}(M)=0$  
    it is enough to prove $\dim_{\IZ_p} (N)=O([G:G_i]^{1-1/d})$. 
    This follows from the $\IF_p[[G]]$-case, $\rank_{\IF_p[[G]]}(M/pM)=0$, and the inequality 
    \[
    	\dim_{\IZ_p}(N)\le \dim_{\IF_p}(N/pN)=\dim_{\IF_p}\bigl(\IF_p\projtensor_{\IF_p[[G_i]]} M/pM\bigr).
    \]
	So we reduced the proof of the theorem to the case $R=\IF_p$ and henceforth assume  $R=\IF_p$. \smallskip\\
       
  \emph{Reduction to $G$ being standard.}  We finally reduce the assertion to the
  case that $G$ is standard in the sense of~\cite[\S 8.4]{DDMS}.  Being a $p$-adic analytic group, $G$ has an open subgroup $H$ which is standard with
  respect to the manifold structure induced from $G$, see~\cite[Theorem
  8.29]{DDMS}. Since $H$ is open, we have $G_i<H$ for $i$ greater than some
  $i_0$. Being standard $H$ has a preferred collection of open normal subgroups $H_i$ which satisfy: $G_{i_0 + i-1} \subset H_i \subset G_i$ ($i\geq 1$); see e.g.,
 \cite[Ex. 6 p. 168]{DDMS}. 

Recall that we may assume that $\rank_{\IF_p[[G]]}(M)=0$ due to the reduction to the 
	  case~\eqref{eq:one-dimensional case} and the fact that $\IF_p[[G]]$ has no 
	zero-divisors.

Now if the assertion holds for $H$ with respect to the $H_i$'s, then it follows that for $i>i_0$ the
  left hand side of~\eqref{eq:assertion} is bounded by a constant times 
$ \dim_R  \bigl(R \projtensor_{R[[H_{i-i_0 +1}]]} M\bigr)$ and is therefore  $O([H:H_i]^{1-1/d}) = O([G:G_i]^{1-1/d})$,
  so the assertion holds for $G$ as well. We assume from now on that $G$ is standard 
and that $G_i = \psi^{-1} (p^i \IZ_p^d)$ where $\psi$ is the global atlas of $G$. \smallskip\\

 \emph{The remaining argument.} Let $M$ be as
        in~\eqref{eq:one-dimensional case}. We may assume $a\ne 0$.  By a
        fundamental result of Lazard, the graded ring $\gr\IF_p[[G]]$ with
        respect to the filtration $(\Delta^n)_{n\ge 0}$ by powers of the
        augmentation ideal $\Delta\subset\IF_p[[G]]$ is a polynomial algebra
        $\IF_p[X_1,\ldots, X_d]$ with indeterminates
        $X_i=x_i-1+\Delta^2$~\cite[Theorem~8.7.10 on p.~160]{Wilson}, where
        $\{x_1,\ldots,x_d\}\subset G$ is a minimal generating set. Let
        $I_i\subset \IF_p[[G]]$ be the closure of the ideal generated by
        elements $\lambda(h-1)$ with $h\in G_i$. Note that $N/I_iN\cong
        \IF_p\projtensor_{\IF_p[[G_i]]} N$ for any $\IF_p[[G]]$-module $N$.
        Since $\IF_p[[G]]$ is a domain, $\rank_{\IF_p[[G]]}(M)=0$. Now for each
        integer $i \geq 1$ (if $p>2$) or $i\geq 2$ (if $p=2$), the global atlas
        $\psi$ of $G$ induces an epimorphism $G_i \to p^i \IZ_p^d / p^{i+1}
        \IZ_p^d$ with kernel $G_{i+1}$. It therefore follows that
        $[G:G_i]=Cp^{id}$ for some rational constant $C>0$ and we have to show
        that
	\begin{equation}\label{eq:decay}
		\dim_{\IF_p} (M/I_iM)=O(p^{(d-1)i}). 
	\end{equation}
	But it follows from~\cite[Lemma 7.1]{DDMS} that there exists a positive
        integer $m$ such that $\Delta^{mp^i}\subset I_i$ for all $i$. It
        therefore suffices to show~\eqref{eq:decay} with $I_i$ replaced by
        $\Delta^{mp^{i}}$. Let $s\ge 0$ be such that $a\in
        \Delta^s\backslash\Delta^{s+1}$.  Let $a_i:\IF_p[[G]]/\Delta^{mp^i}\to
        \IF_p[[G]]/\Delta^{mp^i}$ be the map induced by right multiplication
        with $a$.  We have
	\begin{align*}
			\dim_{\IF_p} (M/\Delta^{mp^i}M) &= \dim_{\IF_p}\coker(a_i)\\
								&= \dim_{\IF_p}\ker(a_i)\\
								&= \dim_{\IF_p} (\Delta^{mp^i-s}/\Delta^{mp^i}).
	\end{align*}
	The last equality follows from the fact the graded ring is a polynomial
        ring. For the same reason the last number equals the number of monomials
        in a polynomial ring with $d$ variables each of which has total degree
        in the interval $[mp^i-s, mp^i)$.  The number of monomials of degree 
       $<  k$ is $\binom{d+k-1}{d}$. Hence
	\[
			\dim_{\IF_p} (M/\Delta^{mp^i}M)=\binom{d+mp^i-1}{d}-\binom{d+mp^i-s-1}{d}. 
	\]
	As a polynomial in $p^i$, each binomial coefficient has leading term $(mp^i)^d$. Their difference is a polynomial in $p^i$ with 
	degree at most $d-1$. This implies~\eqref{eq:decay}. 
\end{proof}

\begin{proof}[Proofs of Theorems~\ref{T1} and~\ref{T2}]
	We show for both cases $R=\IZ_p$ and $R=\IF_p$ simultaneously that 
	\begin{equation}\label{eq:for both}
	b_k (X_i ; R ) = \rank_{R[[G]]}\bigl(H_k(R[[G]]\otimes_{R\bar\Gamma}C_\ast(\overline{X}))\bigr)\cdot [\Gamma : \Gamma_i] + O\left([\Gamma : \Gamma_i]^{1-{1/d}} \right).
	\end{equation}
	The CW-structure on $X$ lifts to a $\bar\Gamma$-equivariant CW-structure on $\bar X$ 
	and to $\Gamma_i\backslash  \Gamma$-equivariant 
	CW-structures on $X_i$. We may also view $\bar X$ as a $\Gamma$-space via the 
	quotient map $\Gamma\to \bar\Gamma$. 
	Let $C_\ast(\bar X)$ be the cellular chain complex of $\bar{X}$. Each chain module $C_k(\bar X)$ 
	is a finitely generated free $\IZ[\bar \Gamma]$-module. 
	The differentials in the chain complex $C_\ast=R\otimes_\IZ C_\ast(\bar X)$ are denoted by $\partial_\ast$. 
    Note that $R\otimes_{R[\Gamma_i]} C_\ast$ is isomorphic to the cellular chain complex 
    $R\otimes_\IZ C_\ast(X_i)$ as an $R[\Gamma_i\backslash \Gamma]$-chain complex. In particular, we have 
	\begin{equation}\label{eq:cellular homology on finite covers}
		H_\ast(X_i,R)\cong H_\ast(R\otimes_{R[\Gamma_i]} C_\ast). 
	\end{equation}
	We write $\hat C_\ast$ and $\hat\partial_\ast$ short for $R[[G]]\otimes_{R\bar\Gamma}C_\ast$ and its differentials.  
	We denote 
	the cycles and boundaries in the chain complexes $\hat C_\ast$ and $C_\ast$ 
	by $\hat Z_\ast$, $\hat B_\ast$ and $Z_\ast$, $B_\ast$, respectively. 
	Let $r_n\in\IN_0$ be the rank of the finitely generated free $R[[G]]$-module $\hat C_n$. 
	In each degree $n$ we have the obvious exact sequence
	\begin{equation*}
		0\to \hat Z_n\to \hat C_n\xrightarrow{\hat\partial_n} \hat C_{n-1}\to \coker(\hat\partial_n)\to 0. 
	\end{equation*}
    By additivity of $\rank_{R[[G]]}$ we obtain that 
    \begin{align}\label{eq:rank above}
    	\rank_{R[[G]]} \bigl(H_k(\hat C_\ast)\bigr)&=\rank_{R[[G]]}\bigl(\hat Z_k/\hat B_k\bigr)\notag\\
    		&= \rank_{R[[G]]}(\hat Z_k)-\rank_{R[[G]]}(\hat B_k)\notag\\
			&= r_k-r_{k-1}+\rank_{R[[G]]}\bigl(\coker(\hat\partial_k)\bigr)-\rank_{R[[G]]}(\hat B_k)\\
			&=r_k-r_{k-1}+\rank_{R[[G]]}\bigl(\coker(\hat\partial_k)\bigr)-\bigl(r_k-\rank_{R[[G]]}\bigl(\coker(\hat\partial_{k+1})\bigr)\bigr)\notag\\
			&=\rank_{R[[G]]}\bigl(\coker(\hat\partial_k)\bigr)+\rank_{R[[G]]}\bigl(\coker(\hat\partial_{k+1})\bigr)-r_{k-1}.\notag 
    \end{align}
	By~\eqref{eq:cellular homology on finite covers}, 
	a similar argument as above, and right-exactness of the tensor product, 
	we obtain that 
	\begin{align*}
		\dim_R \bigl(H_k(X_i, R)\bigr) &= \dim_R \bigl(H_k(R\otimes_{R[\Gamma_i]} C_\ast)\bigr)\\
			&= \dim_R \bigl(\coker(R\otimes_{R[\Gamma_i]}\partial_k)\bigr)+ \dim_R \bigl(\coker(R\otimes_{R[\Gamma_i]}\partial_{k+1})\bigr)-[\Gamma:\Gamma_i]\cdot r_{k-1}\\
			&= \dim_R \bigl(R\otimes_{R[\Gamma_i]}\coker(\partial_k)\bigr)+ \dim_R \bigl(R\otimes_{R[\Gamma_i]}\coker(\partial_{k+1})\bigr)-[\Gamma:\Gamma_i]\cdot r_{k-1}.
	\end{align*}
	Hence, 
	\begin{equation}\label{eq:betti by cokernels}
		\frac{b_k(X_i,R)}{[\Gamma:\Gamma_i]}=\frac{\dim_R \bigl(R\otimes_{R[\Gamma_i]}\coker(\partial_k)\bigr)}{[\Gamma:\Gamma_i]}+ \frac{\dim_R \bigl(R\otimes_{R[\Gamma_i]}\coker(\partial_{k+1})\bigr)}{[\Gamma:\Gamma_i]}-r_{k-1}. 
	\end{equation}
    The natural map 
\[
	R\otimes_{R[\Gamma_i]} R[\Gamma]\xrightarrow{\cong} R\projtensor_{R[[G_i]]}R[[G]] 
\]	
induced by $\phi:\Gamma\to G$ is a right $R[\Gamma]$-module isomorphism (recall
that we regard $R[[G]]$ as a right $R[\Gamma]$-module via $\phi$). The inverse
is obtained as follows: Since $\Gamma_i\backslash \Gamma\cong G_i\backslash G$,
there is a natural continuous homomorphism from $G$ to the invertible elements
of the $R$-algebra $R\otimes_{R[\Gamma_i]} R[\Gamma]$.  By the universal
property of the completed group algebra there is a continuous homomorphism
$R[[G]]\to R\otimes_{R[\Gamma_i]} R[\Gamma]$ which descends to the desired
inverse.  As a consequence we get isomorphisms
	\begin{align*}
		R\projtensor_{R[[G_i]]}\coker(\hat\partial_k)& \cong R\otimes_{R[[G_i]]} R[[G]]\otimes_{R[\bar\Gamma]} \coker(\partial_k) \\
			&\cong  R\otimes_{R[[G_i]]} R[[G]]\otimes_{R[\Gamma]} \coker(\partial_k) \\
			& \cong R\otimes_{R[\Gamma_i]} \coker(\partial_k).
	\end{align*}
	and, thus, 
	\begin{equation}\label{eq:betti on finite cover}
			\frac{b_k(X_i,R)}{[G:G_i]}=\frac{\dim_R \bigl(R\projtensor _{R[[G_i]]}\coker(\hat\partial_k)\bigr)}{[G:G_i]}+ \frac{\dim_R \bigl(R\projtensor_{R[[G_i]]}\coker(\hat\partial_{k+1})\bigr)}{[G:G_i]}-r_{k-1}. 
	\end{equation}
	Now~\eqref{eq:for both} follows from~\eqref{eq:rank above},
        \eqref{eq:betti on finite cover}, and Theorem~\ref{thm:harris}.  Note
        that~\eqref{eq:for both} is exactly the statement of Theorem~\ref{T2} in
        the case $R=\IF_p$. Next we explain how Theorem~\ref{T1} follows
        from~\eqref{eq:for both} when $R=\IZ_p$. Since $\IQ_p$ has
        characteristic zero, we have $b_k(X_i)=b_k(X_i,\IQ_p)=b_k(X_i,
        \IZ_p)$. Since $b_k(X_i)/[\Gamma:\Gamma_i]\to \beta_k (\overline{X} ,
        \overline{\Gamma})$ as $i\to\infty$~\cite{Luck2}, we conclude
        	\begin{equation} \label{eq:Oreside}
		 \beta_k (\overline{X} , \overline{\Gamma})=\rank_{\IZ_p[[G]]}\bigl(H_k(\IZ_p[[G]]\otimes_{\IZ_p\bar\Gamma}C_\ast)\bigr).\qedhere
	\end{equation}
\end{proof}

\section{Relation with the completed homology}\label{sec: completed homology}

Calegari and Emerton~\cite{CE,CE2} have introduced the completed homology
groups:
\[\widetilde{H}_k = \varprojlim H_k (X_i , \mathbb{Z}_p) 
\quad \mbox{ and } \quad \widetilde{H}_k (\mathbb{F}_p) = \varprojlim H_k (X_i , \mathbb{F}_p).\]
These modules carry continuous actions of $G$ and may therefore be considered as
$\mathbb{Z}_p [[G]]$-modules or $\mathbb{F}_p [[G]]$-modules, respectively. 
In this section we 
want to clarify the relation of completed cohomology to (mod~$p$) $L^2$-Betti numbers. 

\begin{prop} 
Retaining the setup in section~\ref{sec:global} we have:
\begin{equation*}
\beta_k (\overline{X} , \overline{\Gamma}; \mathbb{F}_p ) = \mathrm{rank}_{\mathbb{F}_p [[G]]} (\widetilde{H}_k (\mathbb{F}_p) ).
\end{equation*}
\end{prop}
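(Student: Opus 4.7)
The plan is to establish the stronger statement that $H_k(\hat C_\ast) \cong \tilde H_k(\IF_p)$ as $\IF_p[[G]]$-modules, where $\hat C_\ast = \IF_p[[G]] \otimes_{\IF_p[\bar\Gamma]} C_\ast(\bar X, \IF_p)$ is the chain complex used in the proof of Theorem~\ref{T2}. Since $\beta_k(\bar X, \bar\Gamma; \IF_p) = \rank_{\IF_p[[G]]} H_k(\hat C_\ast)$ holds by definition, the proposition then follows by taking ranks (the $\IF_p[[G]]$-module $H_k(\hat C_\ast)$ is finitely generated because $\IF_p[[G]]$ is Noetherian and $\hat C_\ast$ is a complex of finitely generated free modules).

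First I would identify $\hat C_\ast$ with the inverse limit $\varprojlim_i C_\ast(X_i, \IF_p)$ at the level of $\IF_p[[G]]$-modules. Each $\hat C_n$ is a finitely generated free $\IF_p[[G]]$-module whose rank equals the number of $n$-cells of $X$. The isomorphisms $\IF_p \projtensor_{\IF_p[[G_i]]} \hat C_n \cong C_n(X_i, \IF_p)$ that appear in the proof of Theorem~\ref{T2} assemble into a compatible family of $G$-equivariant surjections $\hat C_n \twoheadrightarrow C_n(X_i, \IF_p)$, and since $\IF_p[[G]] = \varprojlim_i \IF_p[G/G_i]$, the resulting map $\hat C_n \to \varprojlim_i C_n(X_i, \IF_p)$ is an isomorphism of $\IF_p[[G]]$-modules compatible with the differentials.

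Next I would invoke the standard Milnor-type short exact sequence associated to the inverse limit of chain complexes whose transition maps are surjective in each degree,
\[
0 \to {\lim_i}^1 H_{k+1}(X_i, \IF_p) \to H_k(\hat C_\ast) \to \varprojlim_i H_k(X_i, \IF_p) \to 0.
\]
The required surjectivity $C_n(X_{i+1}, \IF_p) \twoheadrightarrow C_n(X_i, \IF_p)$ is immediate because $X_{i+1} \to X_i$ is a covering of finite CW-complexes, so every cell of $X_i$ lifts; and the rightmost term is $\tilde H_k(\IF_p)$ by definition.

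The main (and essentially only) potential obstacle is the vanishing of the $\lim^1$ term. However, since each $X_i$ is a finite CW-complex, $H_{k+1}(X_i, \IF_p)$ is a finite $\IF_p$-vector space; any inverse system of finite abelian groups is Mittag-Leffler (the descending chain of images $\image(H_{k+1}(X_j, \IF_p) \to H_{k+1}(X_i, \IF_p))$ stabilizes for each fixed $i$), so its $\lim^1$ vanishes. Combining the three steps yields the desired $\IF_p[[G]]$-module isomorphism $H_k(\hat C_\ast) \cong \tilde H_k(\IF_p)$, and taking ranks concludes the proof.
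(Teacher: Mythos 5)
Your proposal is correct and follows essentially the same route as the paper: identify $\hat C_\ast=\IF_p[[G]]\otimes_{\IF_p[\bar\Gamma]}C_\ast(\overline{X},\IF_p)$ with the inverse limit of the chain complexes of the finite covers, apply the Milnor $\varprojlim^1$ exact sequence for a Mittag--Leffler tower of chain complexes, and kill the $\varprojlim^1$ term because towers of finite-dimensional vector spaces over a field are Mittag--Leffler. Your write-up is in fact slightly more explicit than the paper's on the identification $\hat C_n\cong\varprojlim_i C_n(X_i,\IF_p)$ and on the surjectivity of the transition maps, which the paper leaves implicit.
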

\begin{proof} 
  Here again we may reduce to the case where $G$ is torsion-free. 
Write $C_\ast=C_\ast(\bar X;\IF_p)$. 
The claim is equivalent to 
$$\widetilde{H}_k(\IF_p)= \varprojlim H_k (\IF_p \otimes_{\IF_p [\Gamma_i ]} C_\ast )$$
and
$$H_k(\IF_p[[G]]\otimes_{\IF_p\bar\Gamma}C_\ast ) = H_k ( \varprojlim  (\IF_p \otimes_{\IF_p [\Gamma_i ]} C_\ast ) )$$
having the same $\mathbb{F}_p [[G]]$-rank. So the statement is equivalent to: 
\begin{equation*}
Q( \IF_p
[[G]]) \otimes_{\IF_p [[\Gamma]]} H_k ( \varprojlim  (\IF_p \otimes_{\IF_p [\Gamma_i ]} C_\ast ) ) 
\cong Q( \IF_p
[[G]]) \otimes_{\IF_p [[\Gamma]]} \left( \varprojlim H_k (\IF_p \otimes_{\IF_p [\Gamma_i ]} C_\ast ) \right).
\end{equation*} 
Since $\IF_p \otimes_{\IZ_p [\Gamma_i ]} C_\ast $ is a tower of chain
complexes of abelian groups satisfying the Mittag-Leffler condition, by~\cite[Theorem 3.5.8]{Weibel} there is a short exact sequence 
\[0\to 
\varprojlim\nolimits^1 H_{k+1} (\IF_p \otimes_{\IF_p [\Gamma_i ]} C_\ast ) \to
H_k (\varprojlim (\IF_p \otimes_{\IF_p [\Gamma_i ]} C_\ast )) 
\to \varprojlim H_k (\IF_p \otimes_{\IF_p [\Gamma_i ]} C_\ast ) \to 0.
\]
Moreover, since towers of finite dimensional vector spaces over a
\emph{field} satisfy the Mittag-Leffler condition,   
we conclude that
$$\varprojlim\nolimits^1 H_{k+1} (\IF_p \otimes_{\IF_p [\Gamma_i ]} C_\ast ) =0,$$
which yields the proposition. 
\end{proof}

It follows from works of Calegari and Emerton that a similar result with $\IF_p$ replaced by $\IZ_p$ holds as well, that is, 
\begin{equation}\label{eq: completed cohom}
\beta_k (\overline{X} , \overline{\Gamma}; \mathbb{Z}_p ) = \mathrm{rank}_{\mathbb{Z}_p [[G]]} (\widetilde{H}_k),
\end{equation}
and we want to indicate why. In~\cite[Theorem~3.2]{CE1} it is shown for 
arithmetic congruence covers $X_i$ of symmetric spaces that 
the so-called co-rank $r_k$ of the completed cohomology $\widetilde H^k$ satisfies 
the equality in Theorem~\ref{T1} with $\beta_k (\bar X, \bar \Gamma)$ replaced 
by $r_k$. The proof in~\cite{CE1} is a consequence of a general result of Emerton \cite[Theorem 2.1.5]{Emerton} and their Lemma 2.2. Both these results hold not only for arithmetic congruence covers but in our generality. Since the co-rank of $\widetilde H^k$ is the same as the rank of 
$\widetilde H_k$~\cite[Theorem~1.1 (3)]{CE}, this implies~\eqref{eq: completed cohom}.

It is somewhat harder to work with completed homology, see \cite{CE1,CE2}. We nevertheless want to emphasize
that the latter contains a lot more information. It should be the right framework to determine the growth rate of 
(mod $p$) Betti numbers even if the corresponding (mod $p$) $L^2$-Betti number vanishes. However it seems hard
to extract the necessary information from completed homology, the only exception that we are aware of
is in case $X$ is $3$-dimensional, see \cite{CE2}.  

\section{Approximation results for pro-$p$ towers that are not $p$-adic analytic}\label{sec:not p-adic}

The proof of Theorem~\ref{thm: general existence} relies on the
following well-known lemma (see also~\cite{Ershof-Lueck-Osin(2012)} for a proof). 

\begin{lem}\label{lem:monotonicity lemma}
	Let $k$ be a field of characteristic $p>0$. Let $\Lambda$ be a normal subgroup in a group $\Gamma$ whose index is a $p$-power. 
	Then 
	\[
		\dim_k(k\otimes_{k[\Lambda]}M)\le [\Gamma:\Lambda]\cdot \dim_k(k\otimes_{k[\Gamma]}M). 
	\]
\end{lem}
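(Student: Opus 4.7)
\medskip

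\noindent\textbf{Proof plan.} Set $P = \Gamma/\Lambda$. The standing hypothesis says that $P$ is a finite $p$-group, and the point of the characteristic assumption is that then $k[P]$ is a finite-dimensional local $k$-algebra whose maximal ideal is the augmentation ideal $I_P$; in particular $I_P$ is nilpotent. This will let us run a Nakayama-style argument.

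First I would dispose of the trivial case: if $\dim_k(k\otimes_{k[\Gamma]} M)=\infty$ there is nothing to prove, so assume it equals some finite integer $r$. The next step is to interpret everything in terms of augmentation ideals. Writing $I_\Lambda \triangleleft k[\Gamma]$ for the (two-sided, by normality of $\Lambda$) ideal generated by $\{\lambda-1:\lambda\in\Lambda\}$, one has $k\otimes_{k[\Lambda]}M \cong M/I_\Lambda M$, and this quotient inherits a $k[P]$-module structure because $\Lambda$ is normal. Set $N = M/I_\Lambda M$. A direct check from the definitions gives the identification
\[
  k\otimes_{k[P]} N \;\cong\; M/(I_\Gamma M) \;\cong\; k\otimes_{k[\Gamma]}M,
\]
since the augmentation ideal $I_P\subset k[P]$ acts on $N$ as $I_\Gamma M/I_\Lambda M$.

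The core step is then the following Nakayama-type statement: because $I_P$ is nilpotent, any subset of $N$ whose image spans $N/I_P N$ over $k$ already generates $N$ as a $k[P]$-module. This is immediate by looking at the submodule $N'\subset N$ generated by such a subset: one has $N/N' = I_P(N/N')$, hence $N/N' = I_P^j(N/N')=0$ for $j$ large. Lifting a $k$-basis of $k\otimes_{k[P]}N$ therefore yields a surjection $k[P]^r \twoheadrightarrow N$, which gives
\[
  \dim_k N \;\le\; r\cdot \dim_k k[P] \;=\; [\Gamma:\Lambda]\cdot \dim_k(k\otimes_{k[\Gamma]}M),
\]
which is the desired inequality.

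The one place that requires a little care, and which I would flag as the only real obstacle, is the use of Nakayama's lemma without a finite generation hypothesis on $M$: standard Nakayama needs finite generation, but here nilpotence of $I_P$ (available precisely because $P$ is a finite $p$-group and $\mathrm{char}(k)=p$) bypasses that hypothesis, as explained above. Everything else is formal manipulation of augmentation ideals.
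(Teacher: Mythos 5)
Your argument is correct. It shares with the paper's proof the two essential ingredients --- the identification $k\otimes_{k[\Gamma]}M\cong k\otimes_{k[P]}\bigl(k\otimes_{k[\Lambda]}M\bigr)$ with $P=\Gamma/\Lambda$, and a Nakayama-type argument over the local ring $k[P]$ whose maximal ideal is the augmentation ideal --- but the execution is genuinely different. The paper first reduces to $\Lambda=1$, fixes a finite presentation $k[\Gamma]^m\xrightarrow{f}k[\Gamma]^n\to M\to 0$, applies Nakayama to the free module $k[\Gamma]^n$ to show that lifts of a basis of $k^n$ adapted to $\im(\bar f)$ generate $k[\Gamma]^n$, and then runs a dimension count on images to get $\dim_k(\im(f))\ge|\Gamma|\cdot\dim_k(\im(\bar f))$. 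You instead apply the Nakayama principle directly to the quotient module $N=M/I_\Lambda M$: a lift of a $k$-basis of $N/I_PN$ generates $N$ because $I_P$ is nilpotent, giving a surjection $k[P]^r\twoheadrightarrow N$ and hence $\dim_k N\le r\,|P|$ immediately. Your route buys two things: it avoids the presentation entirely, so no finite-presentation (or even finite-generation) hypothesis on $M$ is needed beyond finiteness of the right-hand side, and it replaces the paper's somewhat delicate counting of dimensions of submodules generated by initial segments of the lifted basis with a one-line bound. The only point to make fully explicit is the nilpotence of $I_P$, which you correctly flag; it follows, e.g., from the fact that $k[P]$ is Artinian local with maximal ideal $I_P$ (the locality being the fact the paper cites from Wilson), or by induction on $|P|$ using a central element of order $p$.
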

\begin{proof}
	Because of the isomorphism 
	\[
		k\otimes_{k[\Gamma]}M\cong k\otimes_{k[\Gamma/\Lambda]} (k\otimes_{k[\Lambda]} M)
	\]
	it suffices to prove the case where $\Lambda$ is trivial and $\Gamma$ is a finite $p$-group. Let 
	\[
		k[\Gamma]^m\xrightarrow{f}k[\Gamma]^n\to M\to 0
	\]
	be a presentation of $M$. Then $\bar f=k\otimes_{k[\Gamma]} f$ is a presentation of $k\otimes_{k[\Gamma]} M$. 
	Since $\dim_k(M)=\vert\Gamma\vert\cdot n-\dim_k(\im(f))$ and $\dim_k(k\otimes_{k[\Gamma]}M)=n-\dim_k(\im(\bar f))$, we have to show 
	that 
	\[
		\dim_k(\im(f))\ge \vert\Gamma\vert\cdot\dim_k(\im(\bar f)). 
	\]
	Extend a $k$-basis $\{u_1,\ldots, u_s\}$ of $\im(\bar f)$ to a $k$-basis
        $\{u_1,\ldots, u_n\}$ of $k^n=k\otimes_{k[\Gamma]}k[\Gamma]^n$.  Let
        $x_1,\ldots, x_n$ be lifts of the $u_i$ to $k[\Gamma]^n$ such that
        $\{x_1,\ldots, x_s\}\subset\im(f)$. Since $k[\Gamma]$ is a local ring
        with the augmentation ideal as the unique maximal
        ideal~\cite[Proposition~7.5.3]{Wilson}, Nakayama's lemma implies that
        $\{x_1,\ldots, x_n\}$ generates $k[\Gamma]^n$ as a
        $k[\Gamma]$-module. Since the $k$-dimension of the $k[\Gamma]$-submodule
        generated by $x_i$ is at most $\vert\Gamma\vert$ and
        $\dim_kk[\Gamma]^n=\vert\Gamma\vert\cdot n$, the $k$-dimension of the
        $k[\Gamma]$-module generated by $\{x_1,\ldots, x_i\}$ is
        $i\vert\Gamma\vert$.  Because of $\{x_1,\ldots, x_s\}\subset\im(f)$,
	\[
		\dim_k(\im(f))\ge\vert\Gamma\vert\cdot s=\vert\Gamma\vert\cdot\dim_k(\im(\bar f))
	\]
	follows. 
\end{proof}

\begin{proof}[Proof of Theorem~\ref{thm: general existence}]
  It follows from Lemma~\ref{lem:monotonicity lemma} that, for any finitely
  presented $k[\Gamma]$-module $M$, the sequence
  $(\dim_k(k\otimes_{k[\Gamma_i]}M)/[\Gamma:\Gamma_i])_{i\ge 0}$ is monotone
  decreasing.  Let $r_n$ be the number of $n$-cells in $X$. Let $\partial_\ast$
  denote the differentials in the $k[\Gamma]$-complex
  $C_\ast(\widetilde{X};k)$. Exactly as in~\eqref{eq:betti by cokernels}, one
  has
  \[ \frac{b_n(X_i;k)}{[\Gamma:\Gamma_i]}=\frac{\dim_k(k\otimes_{k[\Gamma_i]}\coker(\partial_n))}{[\Gamma:\Gamma_i]}+
  \frac{\dim_k(k\otimes_{k[\Gamma_i]}\coker(\partial_{n+1}))}{[\Gamma:\Gamma_i]}-r_{n-1},
  \]
  from which monotonicity follows. Since Betti numbers are non-negative, the
  sequence converges.
\end{proof}

In the remainder of this section we study the question how we can express the limit 
\[
	\Bigl(\frac{b_n(X_i;k)}{[\Gamma:\Gamma_i]}\Bigr)_{i\ge 0}
\]
by some algebraic expression in very specific situations. For that we recall the
notions of ordered group, the Malcev-Neumann power series ring, and the division
closure.

An \emph{ordered group} is a group with a strict total ordering of its elements
which is invariant under left and right translations. A group which has such an
ordering is called \emph{orderable}.  For example, residually torsion-free
nilpotent groups are orderable~\cite[Proposition~1.2 on p.~274]{Braids}.

If $\Gamma$ is an ordered group, then the set of formal power series
$\sum_{\gamma\in\Gamma}a_\gamma\gamma$ with coefficients $a_\gamma$ in a skew
field $k$ whose support $\{\gamma\in\Gamma~\vert~a_\gamma\ne 0\}$ is
well-ordered becomes a skew field with the obvious ring structure extending the
one of the group ring~$k[\Gamma]$~\cite[Corollary~15.10 on p.~95]{Cohn} which is
called the \emph{Malcev-Neumann power series ring}.  We denote it
by~$k((\Gamma))$\footnote{We suppress the order in the notation for reasons to
  be seen below.}.

Suppose $R$ is a subring of a skew field $K$.  Then $D(R,K)$ will denote the
\emph{division closure} of $R$ in $K$, that is the smallest skew subfield of $K$
that contains $R$. If $M_1$ and $M_2$ are the Malcev-Neumann power series rings
of $k[\Gamma]$ with respect to two different orders and $D_1$, $D_2$ the division
closures of $k[\Gamma]$ in $M_1$, $M_2$, respectively, then there is a ring
isomorphism $D_1\cong D_2$ which is the identity on $k[\Gamma]$.  This follows
from the next theorem.
As a consequence, the dimension of the $k((\Gamma))$-vector space
\[k((\Gamma))\otimes_{k[\Gamma]}M\cong k((\Gamma))\otimes_{D(k[\Gamma],
  k((\Gamma)))}\bigl(D(k[\Gamma], k((\Gamma)))\otimes_{k[\Gamma]}M\bigr)\] for a
$k[\Gamma]$-module $M$ does not depend on the choice of the order on~$\Gamma$.

\begin{thm} \label{THughes}
Let $k$ be a skew field, let $\Gamma$ be an orderable group, and let
$M_1,M_2$ be Malcev-Neumann power series rings for $k[\Gamma]$.  Set
$D_1 = D(k[\Gamma ], M_1)$ and $D_2 = D(k[\Gamma], M_2)$.  Then there
is a ring isomorphism $\theta \colon D_1 \to D_2$ such that $\theta$
is the identity on $k[\Gamma]$.
\end{thm}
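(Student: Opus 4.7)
The plan is to recognize that this theorem is an instance of \emph{Hughes' theorem} on the uniqueness of so-called Hughes-free division rings of fractions for locally indicable groups, applied to two different Malcev-Neumann constructions attached to $k[\Gamma]$. Recall that a division ring of fractions $k[\Gamma] \hookrightarrow D$ is called \emph{Hughes-free} if, for every finitely generated subgroup $H \leq \Gamma$, every normal subgroup $N \triangleleft H$ with $H/N$ infinite cyclic, and every transversal $T \subset H$ of $N$ in $H$, the family $\{t\}_{t\in T}$ is left linearly independent over the division closure $D(k[N],D)$. Hughes' theorem asserts that any two Hughes-free division rings of fractions of $k[\Gamma]$ are isomorphic via a unique isomorphism that is the identity on $k[\Gamma]$.

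The first step is to note that every orderable group is locally indicable: if $H \leq \Gamma$ is nontrivial and finitely generated, then restricting the order from $\Gamma$ to $H$ and using convex jumps produces a surjection $H \twoheadrightarrow \Z$. This places us squarely in the setting where Hughes' theorem applies. The second step, which is the heart of the matter, is to verify that the division closure $D(k[\Gamma], k((\Gamma)))$ of $k[\Gamma]$ inside \emph{any} Malcev-Neumann power series ring $k((\Gamma))$ is Hughes-free. Let $H \leq \Gamma$ be finitely generated, $N \triangleleft H$ with $H/N \cong \Z$, and $T$ a transversal. Suppose a relation $\sum_{t \in T'} d_t\, t = 0$ holds with $d_t \in D(k[N], k((\Gamma)))$ and $T' \subset T$ finite and nonempty. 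The elements of $D(k[N], k((\Gamma)))$ are Malcev-Neumann series supported on the subgroup $N$ (since $N$ is convex in $H$ for the restricted order — here we may, by passing to a refinement if necessary, arrange that $N$ is convex in $H$, which is automatic once we pick the order so that the $H \to \Z$ projection is order-preserving). Then the supports $\mathrm{supp}(d_t)\cdot t$ for distinct $t \in T'$ lie in distinct cosets of $N$ in $H$, hence are disjoint, so the relation forces each $d_t = 0$.

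The third step is then to apply Hughes' theorem: since $D_1 = D(k[\Gamma], M_1)$ and $D_2 = D(k[\Gamma], M_2)$ are both Hughes-free division rings of fractions of $k[\Gamma]$, Hughes' uniqueness theorem provides a ring isomorphism $\theta \colon D_1 \to D_2$ that restricts to the identity on $k[\Gamma]$.

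The main obstacle will be the verification of the Hughes-free property (step two), in particular the identification of $D(k[N], k((\Gamma)))$ with the sub-skew-field of series supported on $N$. One has to be careful that the Malcev-Neumann construction is compatible with restriction to subgroups: the inclusion $k((N)) \hookrightarrow k((\Gamma))$ (with respect to the restricted order) identifies $k((N))$ with the series in $k((\Gamma))$ supported on $N$, and one must check that this identification carries the division closure of $k[N]$ in $k((N))$ to the division closure of $k[N]$ in $k((\Gamma))$. Once this compatibility is in place, the coset-disjointness argument gives Hughes-freeness cleanly, and the rest of the proof is a direct invocation of the uniqueness half of Hughes' theorem, which we take as a black box.
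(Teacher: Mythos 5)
Your proposal is correct and follows essentially the same route as the paper: orderable implies locally indicable, the division closure of $k[N]$ in a Malcev--Neumann ring is contained in the sub-skew-field of series supported on $N$, hence a transversal of $N$ in $H$ is linearly independent over it by disjointness of coset supports, and the (slightly generalized) uniqueness theorem of Hughes finishes the argument. The only quibble is that your detour through convexity of $N$ and ``passing to a refinement'' of the order is both unavailable (the orders are fixed by $M_1$ and $M_2$) and unnecessary: the set of series supported on $N$ is a sub-skew-field for the restricted order regardless of convexity, so it contains $D(k[N],k((\Gamma)))$, which is all the coset-disjointness argument requires.
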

\begin{proof} 
  Recall~\cite[Corollary~1.4]{Rhmetulla-Rolfson(2002)}  that, being orderable, the
  group $\Gamma$ is locally indicable, i.e., each finitely generated subgroup
  $H$ surjects onto $\mathbb{Z}$.
Let $J \lhd H$ be the kernel of this surjection and pick $t \in H$ so
that $H = \langle J,t \rangle$.  Let $N_1$ and $N_2$ denote the
subrings of $M_1$ and $M_2$ respectively consisting of power series
with supports in $J$.  Then $D(kJ,D_1) \subseteq N_1$ and $D(kJ,D_2)
\subseteq N_2$.  Clearly $\{t^n \mid n \in \mathbb{N}\}$ is linearly
independent over $N_i$ in $M_i$ for $i = 1,2$.
  Theorem~\ref{THughes} therefore follows from a slight
  generalization of Hughes' theorem~\cite[7.1 Theorem]{DHS04}; all that one
  needs to do is to modify the proof of \cite[7.2 Lemma]{DHS04}; this is done
  explicitly in~\cite[Hughes' Theorem I 6.3]{Sanchez08}.
\end{proof}

\begin{thm}\label{thm:limit for residually torsionfree nilpotent coverings}
  Let $k$ be a field of characteristic $p>0$.  Let $\Gamma$ be a finitely
  generated group, and let $\Gamma = \Gamma_0 > \Gamma_1 > \cdots$ be a descending
  sequence of normal subgroups such that $\bigcap_i\Gamma_i=\{1\}$ and each
  $\Gamma/\Gamma_i$ is torsion-free nilpotent.  Set $H_i=\Gamma_i\Gamma^{p^i}$
  (thus $\Gamma/H_i$ is a finite $p$-group).  If $X$ is a compact connected
  CW-complex with fundamental group $\Gamma$ and $X_i$ the finite cover
  corresponding to $H_i$, then\footnote{Recall that the left hand side
    of~\eqref{eq:betti special tower} does not depend on the order we choose to
    define $k((\Gamma))$.}, for every $n$,
  \begin{equation}\label{eq:betti special tower}
    \dim_{k((\Gamma))}\bigl(H_n(k((\Gamma))\otimes_{k[\Gamma]} C_\ast(\widetilde{X}, k))\bigr)=\lim_{i\to\infty}\frac{b_n(X_i;k)}{[\Gamma:H_i]}.
  \end{equation}
  In particular, the limit on the right hand side is an integer.
\end{thm}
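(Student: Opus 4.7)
The existence of the limit on the right-hand side of~\eqref{eq:betti special tower} is already furnished by Theorem~\ref{thm: general existence}, so the content of the theorem is to identify this limit with the Malcev-Neumann dimension on the left. The plan is to mimic the book-keeping from the proofs of Theorems~\ref{T1} and~\ref{T2}. After choosing cellular bases, view the differentials $\partial_\ast$ of $C_\ast(\widetilde X; k)$ as matrices over $k[\Gamma]$. By the analogue of~\eqref{eq:betti by cokernels}, the normalized Betti number $b_n(X_i;k)/[\Gamma:H_i]$ decomposes as $\dim_k(k \otimes_{k[H_i]} \coker \partial_n)/[\Gamma:H_i] + \dim_k(k \otimes_{k[H_i]} \coker \partial_{n+1})/[\Gamma:H_i] - r_{n-1}$, and a parallel computation mirroring~\eqref{eq:rank above} with $k((\Gamma))$ in place of $R[[G]]$ yields
\[
  \dim_{k((\Gamma))} H_n\bigl(k((\Gamma)) \otimes_{k[\Gamma]} C_\ast(\widetilde X;k)\bigr) = \dim_{k((\Gamma))} \coker \bar\partial_n + \dim_{k((\Gamma))} \coker \bar\partial_{n+1} - r_{n-1},
\]
where $\bar\partial_\ast = 1 \otimes \partial_\ast$. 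Hence the theorem reduces to the approximation statement that, for every finitely presented $k[\Gamma]$-module~$N$,
\[
  \lim_{i \to \infty} \frac{\dim_k(k \otimes_{k[H_i]} N)}{[\Gamma:H_i]} = \dim_{k((\Gamma))}\bigl(k((\Gamma)) \otimes_{k[\Gamma]} N\bigr).
\]

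Presenting $N = \coker(r_A)$ for a matrix $A \in M(r \times s, k[\Gamma])$, the two sides of this equality take the form $s$ minus a rank of~$A$: on the left the limit $\rho(A) := \lim_i \rank_k(A \bmod H_i)/[\Gamma:H_i]$, whose existence follows from Theorem~\ref{thm: general existence} (or directly from Lemma~\ref{lem:monotonicity lemma}); on the right the Malcev-Neumann rank $\rank_{k((\Gamma))}(A)$. Standard arguments show that $\rho$ is a Sylvester matrix rank function on $k[\Gamma]$, so the task becomes to prove $\rho = \rank_{k((\Gamma))}$.

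The main obstacle, and the heart of the proof, is this identification. The strategy is to invoke the Hughes uniqueness theorem underlying Theorem~\ref{THughes}: since $\Gamma$ is locally indicable (being residually torsion-free nilpotent), the division closure $D(k[\Gamma], k((\Gamma)))$ is the unique Hughes-free epic division $k[\Gamma]$-ring of fractions. The plan is to show that the epic division $k[\Gamma]$-ring of fractions built from $\rho$ is again Hughes-free: for every finitely generated subgroup $H \le \Gamma$ with splitting $H = \langle J, t \rangle$ (where $J \lhd H$ and $t$ has infinite order modulo~$J$), the powers $\{t^n\}_{n \in \mathbb{Z}}$ are linearly independent over the division closure of $k[J]$ inside that $\rho$-skew-field. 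The key input is that each quotient $\Gamma/\Gamma_i$ is torsion-free, so $t \notin \Gamma_i$ for all large~$i$; combined with the fact that $\Gamma/H_i$ is a finite $p$-group, this controls the normalized ranks $\rank_k(\cdot \bmod H_i)$ on powers of~$t$ and yields the required independence. Once Hughes-freeness is verified, uniqueness forces $\rho = \rank_{k((\Gamma))}$, and the integrality of the limit then follows from the integrality of the dimension of a finite-dimensional $k((\Gamma))$-vector space.
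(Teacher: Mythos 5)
Your reduction to a module-level statement (via the cokernels of $\partial_n$ and $\partial_{n+1}$, mirroring \eqref{eq:betti by cokernels} and \eqref{eq:rank above}) is exactly how the paper proceeds: the theorem is deduced from the claim that $\lim_i \dim_k(k\otimes_{k[H_i]}N)/[\Gamma:H_i]=\dim_{k((\Gamma))}\bigl(k((\Gamma))\otimes_{k[\Gamma]}N\bigr)$ for every finitely presented $k[\Gamma]$-module $N$, which is Proposition~\ref{prop:special tower}. The gap is in your proof of that identification. You propose to form ``the epic division $k[\Gamma]$-ring of fractions built from $\rho$'' and then check Hughes-freeness. But a Sylvester matrix rank function yields an epic division ring of fractions only when it is integer-valued, and the integrality of $\rho$ is precisely (part of) the conclusion you are trying to prove. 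Without it there is no division ring whose Hughes-freeness could be tested, so the appeal to Hughes' uniqueness cannot get started; and even granting such a construction, your verification that $\{t^n\}$ is linearly independent over the division closure of $k[J]$ is only gestured at, not carried out. The plan as stated is therefore circular at its crucial step.

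The paper avoids this by never manufacturing a division ring from the limit rank function. It (a) passes to the Ore skew fields $D_m$ of the torsion-free nilpotent quotients $k[\Gamma/\Gamma_m]$ and proves, via an ultraproduct of the $D_i$ (which is automatically a skew field, so no integrality issue arises) together with Hughes' theorem, that $\dim_{D_m}(D_m\otimes_{k[\Gamma]}M)$ stabilizes to $\dim_{k((\Gamma))}(k((\Gamma))\otimes_{k[\Gamma]}M)$ for large $m$ (Lemma~\ref{Lnilpotent}); (b) applies the known approximation theorem \cite[Theorem~6.3]{LLS} to the auxiliary towers $K_i^m=\Gamma_m\Gamma^{p^i}$, identifying their limits with the $D_m$-dimension; and (c) interlaces the actual tower $(H_i)$ with the towers $(K_i^m)_i$ using the monotonicity Lemma~\ref{lem:monotonicity lemma}. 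To salvage your route you would need a substitute for (a) and (b): some a priori comparison of $\rho$ with $\rank_{k((\Gamma))}$ that does not presuppose $\rho$ is integer-valued.
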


\begin{rmk}
  A large collection of groups are known to be residually torsion-free nilpotent
  {(RTFN)}: Carl Droms has proved in his PhD thesis~\cite[Theorem~1.1 in
  Chapter~III on page~58]{Droms(1983)}  (see also~\cite{Agol}) that graph groups
  are {RTFN}. But this property is inherited by subgroups. It therefore follows
  from~\cite{HW} and~\cite{BHW} that free groups,
  surface groups, reflection groups, right-angled Artin groups and arithmetic
  hyperbolic groups defined by  quadratic forms are virtually {RTFN}. It follows from the recent breakthrough 
  of Agol \cite{Agol2012} and Wise \cite{Wise} that the fundamental group of a closed hyperbolic $3$-manifold is virtually {RTFN}.
  We finally note that
  the proof of~\cite[Corollary~2.3]{Agol} implies that direct and free products
  of RTFN groups are RTFN.
\end{rmk}

One deduces the preceding theorem from the following
Proposition~\ref{prop:special tower} by a similar, even easier, argument as used
in the deduction of Theorem~\ref{T1} from Theorem~\ref{thm:harris}. More
precisely: Similarly as in~\eqref{eq:rank above} and~\eqref{eq:betti by
  cokernels} one expresses the left and right hand side of~\eqref{eq:betti
  special tower} by the cokernels of the $n$-th and $(n+1)$-th differential of
the complexes $k((\Gamma))\otimes_{k[\Gamma]} C_\ast(\widetilde{X}, k)$ and
$C_\ast(X_i, k)$, respectively; then one applies the proposition below.

\begin{prop}\label{prop:special tower}
	Let $k$ be a field of characteristic $p>0$.  Let $\Gamma$ be a finitely generated group, and let 
	$\Gamma = \Gamma_0 > \Gamma_1 > \cdots$ be a descending sequence of normal subgroups such that 
	$\bigcap_i\Gamma_i=\{1\}$ and each $\Gamma/\Gamma_i$ is torsion-free nilpotent. 
	Set $H_i=\Gamma_i\Gamma^{p^i}$. 
	If $M$ is a  finitely presented $k[\Gamma]$-module, then 
\[
\dim_{k((\Gamma))} (k((\Gamma)) \otimes_{k[\Gamma]} M)=\lim_{i\to \infty} \frac{\dim_k(k\otimes_{k[H_i]} M)}{[\Gamma:H_i]}.\]
In particular, the limit on the right hand side is an integer. 
\end{prop}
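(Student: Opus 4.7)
The approach will be to emulate the reduction carried out in the proof of Harris's theorem (Theorem~\ref{thm:harris}), with the Malcev--Neumann skew field $k((\Gamma))$ (and its division closure $D=D(k[\Gamma],k((\Gamma)))$, well-defined by Theorem~\ref{THughes}) replacing the Ore localization $Q(R[[G]])$. A finite presentation of $M$ lets one write $M=\coker(r_A\colon k[\Gamma]^r\to k[\Gamma]^s)$ for some matrix $A\in M(r\times s,k[\Gamma])$, whence both sides of the asserted identity take the form $s$ minus a normalized image dimension of $A$. The claim reduces to proving
\[
\lim_{i\to\infty}\frac{\dim_k\im(\bar r_{A,i})}{[\Gamma:H_i]}=\rank_{k((\Gamma))}(A),
\]
where $\bar r_{A,i}$ is the reduction of $r_A$ modulo $H_i$.

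Next I would perform row and column reduction of $A$ over the skew field $D$, obtaining invertible $B\in \GL_r(D)$ and $C\in \GL_s(D)$ with $BAC=\begin{pmatrix}I_t & 0 \\ 0 & 0\end{pmatrix}$, where $t=\rank_{k((\Gamma))}(A)$. Clearing denominators furnishes nonzero $b,c\in k[\Gamma]$ such that $bB$ and $Cc$ have entries in $k[\Gamma]$ and $(bB)A(Cc)=bc\cdot\operatorname{diag}(I_t,0)$. Exactly as in the proof of Theorem~\ref{thm:harris}, the sandwiching inequalities between the kernels and images of $\bar r_{A,i}$ and of $\bar r_{(bB)A(Cc),i}$, together with the analogous estimates obtained by inserting an auxiliary matrix $E$ that annihilates the first $t$ rows, give matching upper and lower bounds for $\dim_k\coker(\bar r_{A,i})$ in terms of $(s-t)[\Gamma:H_i]$ and $\dim_k\coker(\bar r_{a,i})$ for finitely many fixed nonzero $a\in k[\Gamma]$. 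The problem is thereby reduced to the key claim that for every nonzero $a\in k[\Gamma]$,
\[
\lim_{i\to\infty}\frac{\dim_k\coker\bigl(\bar r_a\colon k[\Gamma/H_i]\to k[\Gamma/H_i]\bigr)}{[\Gamma:H_i]}=0.
\]

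The main obstacle is establishing this key claim, and this is where the specific tower structure enters critically. Since $\bigcap_i\Gamma_i=\{1\}$ and $a$ has finite support, for $i$ large the image $\tilde a_i$ of $a$ in $k[\Gamma/\Gamma_i]$ is nonzero; because $\Gamma/\Gamma_i$ is finitely generated torsion-free nilpotent, the group algebra $k[\Gamma/\Gamma_i]$ is an Ore domain, so left multiplication by $\tilde a_i$ on $k[\Gamma/\Gamma_i]$ is injective. Setting $P_i=H_i/\Gamma_i$, a finite $p$-group of $p$-power index in $\Gamma/\Gamma_i$, the regular module $k[\Gamma/\Gamma_i]$ is free of rank $[\Gamma:H_i]$ as a right $k[P_i]$-module. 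The long Tor exact sequence applied to $0\to k[\Gamma/\Gamma_i]\xrightarrow{\tilde a_i}k[\Gamma/\Gamma_i]\to\coker(\tilde a_i)\to 0$, after tensoring with the trivial $k[P_i]$-module $k$, identifies $\ker(\bar r_{a,i})$ on $k[\Gamma/H_i]$ with $\operatorname{Tor}_1^{k[P_i]}(\coker(\tilde a_i),k)$. Bounding this Tor term by $o([\Gamma:H_i])$ is the heart of the matter; one would leverage the monotonicity from Lemma~\ref{lem:monotonicity lemma} (which guarantees the limit exists) together with the vanishing $\rank_{k((\Gamma))}(\coker(r_a))=0$ (since $a$ is a unit in $k((\Gamma))$) to identify the limit as $0$.
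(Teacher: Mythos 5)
Your strategy is genuinely different from the paper's, but it has two gaps, one of which is fatal as written. The paper does not attempt a Harris-style matrix reduction at all: it introduces the doubly-indexed subgroups $K_i^m=\Gamma_m\Gamma^{p^i}$, uses Lemma~\ref{Lnilpotent} (an ultraproduct argument plus Hughes' theorem) to show that $\dim_{D_m}(D_m\otimes_{k[\Gamma]}M)$ stabilizes at $\dim_{k((\Gamma))}(k((\Gamma))\otimes_{k[\Gamma]}M)$ for $m$ large, quotes the approximation theorem of \cite{LLS} for the towers $(K_i^m)_i$ inside the fixed torsion-free nilpotent quotient $\Gamma/\Gamma_m$, and then sandwiches the $H_i$-sequence between these using Lemma~\ref{lem:monotonicity lemma} (note $H_i\subseteq K_i^m$ for $i\ge m$ and $H_m=K_m^m$).

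The first gap in your argument is the denominator-clearing step. In the proof of Theorem~\ref{thm:harris} one can find a single nonzero $b$ (resp.\ $c$) with $bB$ (resp.\ $Cc$) defined over the group ring because $Q(R[[G]])$ is the \emph{Ore} localization of the Noetherian domain $R[[G]]$, so every entry is a left fraction and common denominators exist. Here $D=D(k[\Gamma],k((\Gamma)))$ is only the division closure of $k[\Gamma]$ in the Malcev--Neumann field; for $\Gamma$ residually torsion-free nilpotent but non-amenable (e.g.\ a free group of rank $2$, which satisfies all hypotheses of the proposition) the ring $k[\Gamma]$ is not Ore, its division closure is a genuinely iterated rational closure, and there is no reason that the entries of $B$ and $C$ admit common denominators in $k[\Gamma]$. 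So the reduction to $M=\coker(r_a)$ with $a\in k[\Gamma]$ does not go through. The second gap is that even granting this reduction, your ``key claim'' is exactly the content of the proposition in the rank-one case, and your proposed justification is circular: Lemma~\ref{lem:monotonicity lemma} only yields existence of the limit, and the vanishing of $\rank_{k((\Gamma))}(\coker(r_a))$ is the quantity you are trying to relate to that limit, not an independent input. Identifying the limit as $0$ requires real additional input --- in the paper this is precisely \cite[Theorem~6.3]{LLS} over the nilpotent quotients together with Lemma~\ref{Lnilpotent}; the Lazard/graded-ring endgame of Theorem~\ref{thm:harris} has no analogue for a general residual torsion-free-nilpotent chain. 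Your Tor identification of $\ker(\bar r_{a,i})$ is correct but correctly locates, rather than resolves, the difficulty.
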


A free group is residually torsion-free nilpotent. But even for a free group we
cannot say anything for arbitrary residual $p$-chains.  In particular, the
following question remains open.

\begin{quest}
Let $F$ be a finitely generated free group, let $k$ be a field of
characteristic $p>0$, and let $F=F_0 > F_1 > \cdots$ be a
descending sequence of normal subgroups with $F/F_i$ a finite
$p$-group for all $i$ and $\bigcap_{i\in \mathbb{N}} F_i = 1$.  Let
$M$ be a finitely presented $kG$-module.  Can $\lim_{i\to \infty}
|F/F_i|^{-1} \dim_k (k\otimes_{k[F_i]} M)$ be transcendental?
\end{quest}

In the remainder of this section we are concerned with the proof of Proposition~\ref{prop:special tower} for 
which we need the following lemma. 

\begin{lem} \label{Lnilpotent}
Let $k$ be an arbitrary skew field, let $M$ be a finitely presented
$k[\Gamma]$-module, and let
$\Gamma = \Gamma_0 > \Gamma_1 > \cdots$ be a descending sequence of normal
subgroups with $\Gamma/\Gamma_i$ torsion-free nilpotent for all $i$ with
$\bigcap_{i\in \mathbb{N}} \Gamma_i = 1$.
Let $D_i$ denote the
skew field of fractions of $k[\Gamma/\Gamma_i]$, which is an Ore localization. 
Then there exists $m \in
\mathbb{N}$ such that
\[\dim_{D_i} (D_i \otimes_{k[\Gamma]} M) = \dim_{k((\Gamma))} (k((\Gamma))\otimes_{k[\Gamma]} M)~~\text{for all $i \ge m$}.\]
\end{lem}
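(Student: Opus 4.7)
The plan is to present $M$ as the cokernel of an $s\times r$ matrix $A$ with entries in $k[\Gamma]$ coming from the finite presentation, and then compare ranks. Writing $A_i$ for the image of $A$ under $k[\Gamma] \to k[\Gamma/\Gamma_i]$, the two sides of the asserted equality reduce to
\[
\dim_{D_i}(D_i\otimes_{k[\Gamma]}M) = s - \rank_{D_i}(A_i), \qquad
\dim_{k((\Gamma))}(k((\Gamma))\otimes_{k[\Gamma]}M) = s - \rank_{k((\Gamma))}(A),
\]
so the goal becomes to show that $\rank_{D_i}(A_i) = \rank_{k((\Gamma))}(A)$ for all $i$ sufficiently large. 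To set the stage, I would choose a bi-invariant order on $\Gamma$ with respect to which each $\Gamma_i$ is convex---possible since every $\Gamma/\Gamma_i$ is torsion-free nilpotent (hence bi-orderable) and $\bigcap_i\Gamma_i=\{1\}$. The induced quotient orders give compatible Malcev--Neumann embeddings $k[\Gamma/\Gamma_i]\hookrightarrow k((\Gamma/\Gamma_i))$ and $k[\Gamma]\hookrightarrow k((\Gamma))$, and by polycyclicity of each $\Gamma/\Gamma_i$ the Ore skew field $D_i$ is identified with the division closure $D(k[\Gamma/\Gamma_i],k((\Gamma/\Gamma_i)))$ via Theorem~\ref{THughes}.

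The next step is a monotonicity-and-stabilization argument using Cohn's inner rank $\rho(\cdot)$. Since the rank over an Ore localization of an Ore domain equals the inner rank over the domain, one has $\rank_{D_i}(A_i)=\rho_{k[\Gamma/\Gamma_i]}(A_i)$. For $j\ge i$ the surjection $k[\Gamma/\Gamma_j]\twoheadrightarrow k[\Gamma/\Gamma_i]$ sends a rank-$n$ factorization of $A_j$ to one of $A_i$ of the same size, whence $(\rank_{D_i}(A_i))_i$ is a non-decreasing integer sequence bounded above by $\rho_{k[\Gamma]}(A)\le\min(r,s)$, and so it stabilizes at some integer~$t$. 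The analogous descent from $k[\Gamma]$ combined with Hughes-freeness of $k[\Gamma]\hookrightarrow k((\Gamma))$---which forces $\rho_{k[\Gamma]}(A)=\rank_{k((\Gamma))}(A)$---gives one half of the desired equality, $t\le\rank_{k((\Gamma))}(A)$.

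For the reverse inequality $t\ge\rank_{k((\Gamma))}(A)=:t'$, I would adapt the row/column reduction trick used in the proof of Theorem~\ref{thm:harris}. Working inside the division closure $\mathcal{D}=D(k[\Gamma],k((\Gamma)))$, one finds invertible matrices $P$ and $Q$ over $\mathcal{D}$ putting $A$ into block-diagonal normal form $\mathrm{diag}(I_{t'},0)$. Every entry of $P$, $Q$, and their inverses is built from finitely many elements of $k[\Gamma]$ by an inductive sequence of ring operations and inversions inside $k((\Gamma))$. Since a nonzero element of $k[\Gamma]$ has finite support and $\bigcap_i\Gamma_i=\{1\}$, each such element has nonzero image in $k[\Gamma/\Gamma_i]$ for $i$ large. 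The main obstacle---and the technical heart of the proof---is to show that this non-vanishing propagates through each inversion step, so that the same inductive construction can be performed inside $k((\Gamma/\Gamma_i))$ and yields invertible specializations $P_i,Q_i$ over $D_i$ satisfying $P_iA_iQ_i=\mathrm{diag}(I_{t'},0)$. This propagation should be accomplishable by using Hughes-freeness of the nilpotent Malcev--Neumann embeddings to keep the specialized inductive construction in step with its counterpart inside $k((\Gamma))$; once it is done, the identity $P_iA_iQ_i=\mathrm{diag}(I_{t'},0)$ forces $\rank_{D_i}(A_i)\ge t'$, completing the proof.
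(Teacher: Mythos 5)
Your reduction to comparing $\rank_{D_i}(A_i)$ with $\rank_{k((\Gamma))}(A)$ for a presentation matrix $A$ is reasonable, but all three pillars of the comparison are unsound. (i) The claim that the rank of a matrix over the Ore field of fractions of an Ore domain equals its inner rank over the domain is false in general: this identity is equivalent to the Sylvester-domain property, and Dicks and Sontag showed that every Sylvester domain has weak global dimension at most $2$, so it already fails for the Ore domain $k[\IZ^3]$ (Laurent polynomials in three variables), i.e.\ for a perfectly admissible quotient $\Gamma/\Gamma_i\cong\IZ^3$. Since the only genuine ring maps in sight are the surjections $k[\Gamma]\to k[\Gamma/\Gamma_j]\to k[\Gamma/\Gamma_i]$ (there is no ring map between the $D_i$), losing this identity destroys both the monotonicity/stabilization of $(\rank_{D_i}(A_i))_i$ and the bound by $\rho_{k[\Gamma]}(A)$. (ii) The equality $\rho_{k[\Gamma]}(A)=\rank_{k((\Gamma))}(A)$ is \emph{not} a consequence of Hughes-freeness: it says that $k[\Gamma]$ is a Sylvester domain with $D(k[\Gamma],k((\Gamma)))$ as universal field of fractions. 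That is Lewin's theorem for free groups, but for general residually torsion-free nilpotent $\Gamma$ it is a deep problem; Hughes-freeness only pins down the isomorphism type of the division closure, not its relation to the inner rank. (iii) The step you yourself call ``the technical heart''---specializing $P$, $Q$ and their inverses along $k[\Gamma]\to k[\Gamma/\Gamma_i]$ for large $i$---is exactly what needs proof: entries of $P,Q$ arise from unbounded towers of inversions in $k((\Gamma))$, and there is no uniform-in-$i$ control of non-vanishing of all intermediate expressions. As written, the argument proves neither inequality.

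The paper's proof handles all of this at once with an ultraproduct, and that is the device I would urge you to adopt if you want ``for all sufficiently large $i$'' to come out of a specialization-type argument. Fix a nonprincipal ultrafilter $\omega$ and set $D=\prod_\omega D_i$; since nonzero elements of $k[\Gamma]$ have finite support and $\bigcap_i\Gamma_i=1$, the ring $k[\Gamma]$ embeds in the skew field $D$. One verifies that this embedding is Hughes-free (the linear independence of the powers of $t$ over $D(k[N],D)$ is checked by running the same ultraproduct construction one level down, over the quotients $N/N\cap\Gamma_i$), so \cite[Hughes' Theorem I 6.3]{Sanchez08} yields an isomorphism $D(k[\Gamma],k((\Gamma)))\cong D(k[\Gamma],D)$ over $k[\Gamma]$. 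Hence $\dim_{k((\Gamma))}(k((\Gamma))\otimes_{k[\Gamma]}M)=\dim_D(D\otimes_{k[\Gamma]}M)=\lim_\omega\dim_{D_i}(D_i\otimes_{k[\Gamma]}M)$; since these are integers bounded by the number of generators of $M$ and the identity holds for every nonprincipal $\omega$, the set of indices where equality fails is finite. This replaces your matrix manipulations entirely and never mentions inner ranks.
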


\begin{proof}
Choose a nonprincipal ultrafilter $\omega$ on $\mathbb{N}$ and let
$D$ denote the ultraproduct of the $D_i$ with respect to the
ultrafilter $\omega$.  Then $D$ is a skew field and $k[\Gamma]$ embeds in $D$.
Consider a nontrivial finitely generated subgroup $H$ of $\Gamma$.  Let
$n$ be the least positive integer such that $H$ is not contained in
$\Gamma_n$.  Then $H/H \cap \Gamma_n$ is a nontrivial finitely generated
torsion-free nilpotent group, so there exists $N \lhd H$ such that
$H/N$ is infinite cyclic.  Choose $t \in H \setminus N$ so that $H =
\langle N,t\rangle$.  Then $\{t^i \mid i \in \mathbb{N}\}$ is
linearly independent over $k((N))$ and it follows that $\{t^i \mid i
\in \mathbb{N}\}$ is linearly independent over $D(k[N],k((\Gamma)))$.
Next let $E_i$ denote the
skew field of fractions of $k[N/N\cap \Gamma_i]$ and form the ultraproduct
$E$ of the $E_i$ with respect to $\omega$.  Then $E$ is a skew field
contained in $D$, and $\{t^i \mid i \in \mathbb{N}\}$ is linearly
independent over $E$.  Therefore $\{t^i \mid i\in \mathbb{N}\}$ is
linearly independent over $D(k[N],D)$ and we deduce from
\cite[Hughes' Theorem I 6.3]{Sanchez08}
that there is an isomorphism $\theta \colon
D(k[\Gamma],k((\Gamma))) \to D(k[\Gamma],D)$ such that $\theta$ is the identity on $k[\Gamma]$.
Therefore $\dim_{k((\Gamma))} k((\Gamma)) \otimes_{k[\Gamma]} M = \dim_D D
\otimes_{k[\Gamma]} M$.  Also $\dim_D D \otimes_{k[\Gamma]} M = \lim_{\omega}
\dim_{D_i} D_i \otimes_{k[\Gamma]} M$ and the result follows.
\end{proof}

\begin{proof}[Proof of Proposition~\ref{prop:special tower}]
  For each $i\in \mathbb{N}$, let $D_i$ denote the skew field of fractions of
  $k[\Gamma/\Gamma_i]$.  By Lemma~\ref{Lnilpotent}, there exists $m_0\in
  \mathbb{N}$ such that $\dim_{D_m} D_m \otimes_{k[\Gamma]} M = \dim_{k((\Gamma))}
  k((\Gamma)) \otimes_{k[\Gamma]} M$ for all $m \ge m_0$.  For $i, m\ge m_0$, set
  $K_i^m = \Gamma_m\Gamma^{p^i}$.  For any $m\ge m_0$ we have
  \[\lim_{i\to \infty} |\Gamma/K_i^m|^{-1}
  \dim_k (k \otimes_{k[K_i^m]}M) = \dim_{D_m} (D_m \otimes_{k[\Gamma]}
  M)=\dim_{k((\Gamma))} k((\Gamma)) \otimes_{k[\Gamma]} M\] 
by \cite[Theorem~6.3]{LLS}.  The result follows from the monotonicity
  lemma~\ref{lem:monotonicity lemma}.
\end{proof}

\section{Link complements}\label{sec: links}
Suppose that $\Gamma \twoheadrightarrow \Z^d$. We embed $\Z^d
\hookrightarrow \Z_p^d =: G$ and consider the homomorphism $\phi : \Gamma
\rightarrow G$. The $\Z[\Z^d]$-module $H_1 (\overline{X} , \Z )$ is the
Alexander invariant of $X$. There is a natural map $\Z [\Z^d ] \rightarrow
\mathbb{F}_p [[ \Z_p^d ]]$. Since $\Z^d$ is amenable, the group ring
$\mathbb{F}_p [\Z^d ]$ is an Ore domain and it follows from~\cite{LLS} that
\begin{equation*}
  \begin{split}
    \beta_k (\overline{X} , \overline{\Gamma} ; \mathbb{F}_p) & =
    \rank_{\mathbb{F}_p [[\Z_p^d]]} H_k (\overline{X} , \mathbb{F}_p [[\Z_p^d]]) \\ & =  \mathrm{rank}_{\mathbb{F}_p [\Z^d ]}  H_k (\overline{X} , \mathbb{F}_p [\Z^d]) \\
    & = \mathrm{rank}_{\mathbb{F}_p [t_1^{\pm 1} , \ldots , t_d^{\pm1}]} H_k
    (\overline{X} , \mathbb{F}_p [t_1^{\pm 1} , \ldots , t_d^{\pm1}]).
  \end{split}
\end{equation*}
This last expression is zero if and only if the (first) Alexander polynomial
$\Delta$ of $X$ is non zero modulo $p$.\medskip\\
The above remark in particular applies when $X$ is a $3$-manifold
with boundary a union of $d$ tori. If $X$ is knot complement (in which case
$d=1$) the Alexander polynomial $\Delta$ is nonzero and its coefficients are
relatively prime. Hence
\begin{equation} \label{E1} \beta_1 (\overline{X} , \overline{\Gamma} ;
  \mathbb{F}_p) = \rank_{\mathbb{F}_p [[\Z_p^d]]} H_1 (\overline{X} ,
  \mathbb{F}_p [[\Z_p^d]])=0
\end{equation}
and (see also~\cite[Corollary 4.4]{SW2})
$$\lim_{i \rightarrow +\infty} \frac{\dim H_1 (X_i , \mathbb{F}_p)}{[\Gamma: \Gamma_i]} =0.$$
In that case one may even deduce from \eqref{E1} that $H_1 (X_i , \mathbb{F}_p )
= \mathbb{F}_p$ for all $i$, see~\cite[Lemma 5.4]{CE2}.\medskip\\
We may as well consider the case of a link complement $l=l_1 \cup
\ldots \cup l_d$ in $\mathbb{S}^3$. Recall that, in the case $d=2$, $\Delta (1)$
is equal to the linking number $\mathrm{Lk} (l_1 , l_2)$ of the two components
of the link. The same proof as in the knot case then shows that if $p$ is a
prime that does not divide $\mathrm{Lk} (l_1 , l_2)$, then
$$\lim_{i \rightarrow +\infty} \frac{\dim H_1 (X_i , \mathbb{F}_p)}{[\Gamma: \Gamma_i]} =0.$$
One may wonder if the proof may be extended to show that $H_1 (X_i ,
\mathbb{F}_p ) = \mathbb{F}_p^2$ for all $i$ as is true according
to~\cite[Theorem 5.11]{SW2}.

We conclude this note by the proof of Proposition~\ref{prop: link complement}. 

\begin{proof}[Proof of Proposition~\ref{prop: link complement}] It follows from~\cite{Hosokawa} that there exists a link $l$ with
  $2$ components such that $\Delta (t,t) = p$. Now note that $\Delta (t,t)$ is
  the Alexander polynomial associated to the abelian cover of $X$ corresponding
  to the map $\Gamma \rightarrow \Z^2 \rightarrow \Z^2 / \langle a-b \rangle
  \cong \Z$.  Since $\Delta (t,t)$ is non zero,
$$ \lim_{i \rightarrow +\infty} \frac{\dim H_1 (X_i , \Q)}{[\Gamma: \Gamma_i]} =0.$$
But since $\Delta (t,t)$ is zero modulo $p$, we have:
\[\lim_{i \rightarrow +\infty} \frac{\dim H_1 (X_i , \mathbb{F}_p)}{[\Gamma: \Gamma_i]} \neq 0.\qedhere\]
\end{proof}

Other examples  of closed finite $CW$-complexes
with a  chain $\pi_1(X) = \Gamma_0 \supseteq \Gamma_1 \supseteq \Gamma_2 \supseteq \cdots$ 
of in $\pi_1(X)$  normal subgroups of finite index
such that $\lim_{i \rightarrow +\infty} \frac{\dim H_1 (X_i , \mathbb{F}_p)}{[\Gamma: \Gamma_i]} 
\neq \lim_{i \rightarrow +\infty} \frac{\dim H_1 (X_i , \Q)}{[\Gamma: \Gamma_i]}$ holds
can be found in~\cite{Ershof-Lueck-Osin(2012)} and~\cite{Lueck-homology}.
One can additionally  arrange $X = B\Gamma$ or  $\bigcap_{i \ge 0} \Gamma_i = \{1\}$. However,
the  problem is still open to find an example with both $X = B\Gamma$ and  $\bigcap_{i \ge 0} \Gamma_i = \{1\}$.

\bibliography{bibli}

\bibliographystyle{abbrv}

\end{document}